\documentclass[a4paper,twoside,reqno]{amsart}

\usepackage{amssymb}
\usepackage[mathscr]{eucal}
\usepackage{hyperref}
\usepackage{enumitem}

\theoremstyle{plain}
\newtheorem{prop}{Proposition}[section]
\newtheorem{thm}[prop]{Theorem}
\newtheorem{cor}[prop]{Corollary}
\newtheorem{lem}[prop]{Lemma}

\theoremstyle{definition}

\newtheorem{exm}[prop]{Example}

\theoremstyle{remark}
\newtheorem{rem}[prop]{Remark}

\newcommand{\C}{{\mathbb{C}}}
\newcommand{\R}{{\mathbb{R}}}
\newcommand{\conv}{{\rm conv}}
\newcommand{\cone}{{\rm cone}}
\newcommand{\relint}{{\rm relint}}
\newcommand{\Aut}{{\rm Aut}}
\newcommand{\End}{{\rm End}}
\newcommand{\Mat}{{\rm Mat}}
\newcommand{\Sym}{{\rm Sym}}
\newcommand{\SO}{{\rm SO}}
\newcommand{\Ort}{{\rm O}}
\newcommand{\SL}{{\rm SL}}
\newcommand{\Sp}{{\rm Sp}}
\newcommand{\Ad}{{\rm Ad}}
\newcommand{\ad}{{\rm ad}}
\newcommand{\id}{{\rm id}}
\newcommand{\tr}{{\rm tr}}
\newcommand{\re}{{\rm Re}}
\newcommand{\im}{{\rm Im}}


\begin{document}

\title
[A Spectrahedral Representation for Polar Orbitopes]
{A Spectrahedral Representation for Polar Orbitopes}

\author
{Tim Kobert}

\address
 {Fachbereich Mathematik und Statistik \\
 Universit\"at Konstanz \\
 Germany}
\email
 {tim.kobert@uni-konstanz.de}

\begin{abstract}
Let $G$ be a Lie group with real semisimple Lie algebra $\mathfrak{g}$. Further let $\mathfrak{g} = \mathfrak{k} \oplus \mathfrak{p}$ be a Cartan decomposition. The maximal compact subgroup $K \subseteq G$ acts on $\mathfrak{p}$ via the adjoint representation and the convex hulls of the resulting orbits are the polar orbitopes. We prove that every polar orbitope is a spectrahedron by giving an explicit representation. In addition we give a new proof for the fact that the faces of a polar orbitope are, up to conjugation, given by the faces of the momentum polytope. 
\end{abstract}

\maketitle


\tableofcontents

\section*{Introduction}
A finite dimensional representation $\varphi : K \longrightarrow \Aut(V)$ of a group $K$ is \emph{polar} if $V$ is equipped with a $K$-invariant inner product and if there exists a subspace of $V$ which intersects every $K$-orbit of $V$ orthogonally. 
 A \emph{polar orbitope} is the convex hull of a $K$-orbit in a polar representation of a compact group $K$. Given a Lie group $G$  with real semisimple Lie algebra $\mathfrak{g}$, there is a Cartan decomposition $\mathfrak{g} = \mathfrak{k} \oplus \mathfrak{p}$, where $\mathfrak{k}$ is the $1$-eigenspace and $\mathfrak{p}$ the $-1$-eigenspace of the corresponding Cartan involution. Let $K$ be the analytical subgroup of $G$ which corresponds to the lie algebra $\mathfrak{k}$. Then $K$ is a maximal compact subgroup of $G$ and acts on $\mathfrak{p}$ via the adjoint representation. Such a representation is polar and every polar orbitope is isomorphic to the convex hull of such a $K$-orbit on $\mathfrak{p}$ for some real semisimple Lie algebra $\mathfrak{g}$ with Cartan decomposition $\mathfrak{g} = \mathfrak{k} \oplus \mathfrak{p}$ (see \cite{dad} Prop. 6). \\
In \cite{sss} R. Sanyal, F. Sottile and B. Sturmfels introduced the notion of orbitopes. They posed 10 questions about orbitopes and presented results regarding these questions. 
Some examples of polar orbitopes such as the symmetric and skew symmetric Schur-Horn orbitope were presented and some of the questioned were answered.
 In \cite{bgh1} and \cite{bgh2} L. Biliotti, A. Ghigi and P. Heinzner generalized parts of these results to polar orbitopes in general. 
 They proved that every face of a polar orbitope is exposed and gave a bijection between the face orbits of a polar orbitope and its momentum polytope (which is just the image of the given orbitope when orthogonally projected to a maximal abelian subspace of $\mathfrak{p}$).\\
Our main result is Theorem \ref{thmPolarIsSpectra} and is discussed in section \ref{secSpecProp}. The Theorem states that every polar orbitope is spectrahedral. In fact the theorem gives an explicit spectrahedral description of the orbitope. This representation as a spectrahedron generalizes \cite{sss} Theorem 3.4 for the symmetric Schur-Horn orbitope, Theorem 3.15 for the skewsymmetric Schur-Horn orbitope and Theorem 4.7 for the Fan orbitope.\\
 Up to $K$-conjugacy the momentum polytope of a polar orbitope characterizes the face structure of the polar orbitope completely. This is the main result in \cite{bgh2}.
In section \ref{secFaceCorresp} we present a new proof for this correspondence of face orbits.\medskip

\noindent {\bf Acknowledgement.} The author thanks C. Scheiderer for many illuminating conversations, critical comments and useful remarks.


\section{Spectrahedron property} \label{secSpecProp}

Throughout this article we will use the terminology that is used in \cite{kna}.
Let $G$ be a Lie group with real semisimple Lie algebra $\mathfrak{g}$.
We fix a Cartan decomposition $\mathfrak{g} = \mathfrak{k} \oplus \mathfrak{p}$ and the compact subgroup $K$ of $G$ with Lie algebra $\mathfrak{k}$.
The group $K$ acts on $\mathfrak{p}$ via the adjoint representation $\Ad : K \longrightarrow \Aut(\mathfrak{g})$. Let $x \in \mathfrak{p}$ then the polar orbitope $\mathcal{O}_x $ is given as the convex hull of the $K$-orbit of $x$, i.e. $\mathcal{O}_x := \conv(\Ad_K \cdot x)$.\\
An element $x \in \mathfrak{g}$ is {\it real semisimple} if $\ad_x \in \End(\mathfrak{g})$ is diagonalizable over $\R$. 
Let $\mathfrak{a}$ be a maximal abelian subspace of $\mathfrak{p}$. The Weyl group $W = W(\mathfrak{g}, \mathfrak{a})$ is the quotient of the normalizer $N_K (\mathfrak{a}) = \{ k \in K : \Ad_k (\mathfrak{a}) \subseteq \mathfrak{a}\}$ of $\mathfrak{a}$ in $K$ by the centralizer $Z_K(\mathfrak{a}) = \{ k \in K : \Ad_k | _\mathfrak{a} = \id _\mathfrak{a} \}$ of $\mathfrak{a}$ in $K$ and acts on $\mathfrak{a}$ via $\Ad$. Then the following was proved by B. Kostant:

\begin{prop}[\cite{kos} Prop. 2.4]\label{propKosKOrbitInA}
An element $x \in \mathfrak{g}$ is real semisimple iff it is conjugate to an element $a \in \mathfrak{a}$. In that case $(\Ad_G \cdot x) \cap \mathfrak{a} = (\Ad_K \cdot a) \cap \mathfrak{a} = W \cdot a$ is a $W$-orbit. 
\end{prop}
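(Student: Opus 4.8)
The plan is to prove the equivalence in two stages — a soft direction and a harder one — and then read off the description of $(\Ad_G\cdot x)\cap\mathfrak a$ from standard conjugacy facts. For the implication ``$x$ conjugate into $\mathfrak a$ $\Rightarrow$ $x$ real semisimple'' I would fix the positive definite inner product $\langle Y,Z\rangle := -B(Y,\theta Z)$ on $\mathfrak g$, where $B$ is the Killing form and $\theta$ the Cartan involution, and note that $\ad_H$ is self-adjoint for $\langle\,\cdot\,,\cdot\,\rangle$ whenever $H\in\mathfrak p$ and skew-adjoint whenever $H\in\mathfrak k$; in particular every element of $\mathfrak a\subseteq\mathfrak p$ is real semisimple. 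Since $\ad_{\Ad_g x}=\Ad_g\,\ad_x\,\Ad_g^{-1}$, real semisimplicity depends only on the $\Ad_G$-conjugacy class, and this direction follows.

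For the converse, let $x$ be real semisimple; I would first show $x$ is $G$-conjugate into $\mathfrak p$. Since $\ad_x$ is diagonalizable over $\R\subseteq\C$, the element $x$ is semisimple in $\mathfrak g_\C$, so the centralizer $\mathfrak l := Z_{\mathfrak g}(x)$ is a reductive subalgebra of $\mathfrak g$ containing $x$. By the structure theory of real semisimple Lie algebras there is a Cartan involution of $\mathfrak g$ preserving $\mathfrak l$, hence (by conjugacy of Cartan involutions) a $g\in G$ with $\theta(\Ad_g\mathfrak l)=\Ad_g\mathfrak l$; replacing $x$ by $x':=\Ad_g x$ we may assume $\mathfrak l = Z_{\mathfrak g}(x)$ is $\theta$-stable. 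Then $x=y_{\mathfrak k}+y_{\mathfrak p}$ with $y_{\mathfrak k}\in\mathfrak l\cap\mathfrak k$ and $y_{\mathfrak p}\in\mathfrak l\cap\mathfrak p$, and from $[y_{\mathfrak k},x]=[y_{\mathfrak p},x]=0$ we get $[y_{\mathfrak k},y_{\mathfrak p}]=0$. The commuting operators $\ad_{y_{\mathfrak k}}$ and $\ad_{y_{\mathfrak p}}$ are skew-adjoint resp.\ self-adjoint, so have purely imaginary resp.\ real spectrum, while their sum $\ad_x$ is $\R$-diagonalizable; comparing simultaneous eigenvalues forces $\ad_{y_{\mathfrak k}}=0$, whence $y_{\mathfrak k}\in\mathfrak z(\mathfrak g)=0$ and $x=y_{\mathfrak p}\in\mathfrak p$.

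Next, for $y\in\mathfrak p$ arbitrary I would run the classical argument: choose a regular $H_0\in\mathfrak a$ and pick $k_0\in K$ maximizing the continuous function $k\mapsto\langle\Ad_k y,H_0\rangle$ on the compact group $K$; differentiating at $k_0$ in the direction of any $Z\in\mathfrak k$ gives $\langle\Ad_{k_0}y,[H_0,Z]\rangle=0$, and since regularity of $H_0$ together with the restricted root space decomposition gives $[H_0,\mathfrak k]=\mathfrak a^{\perp}\cap\mathfrak p$, we conclude $\Ad_{k_0}y\in\mathfrak a$. Thus $x$ is conjugate to some $a\in\mathfrak a$. For the last assertion, the chain $W\cdot a\subseteq(\Ad_K\cdot a)\cap\mathfrak a\subseteq(\Ad_G\cdot a)\cap\mathfrak a=(\Ad_G\cdot x)\cap\mathfrak a$ is immediate, and for the reverse inclusion I would invoke two standard facts (cf.\ \cite{kna}): a $G$-conjugacy between two elements of $\mathfrak p$ is always realized by an element of $K$, and two $K$-conjugate elements of $\mathfrak a$ are $W$-conjugate. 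Applying these to $a$ and any $a'\in(\Ad_G\cdot x)\cap\mathfrak a$ yields $a'\in W\cdot a$.

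I expect the main obstacle to be the first stage of the converse: converting ``$\ad_x$ is diagonalizable over $\R$'' into ``$x$ is $G$-conjugate into $\mathfrak p$''. Everything there hinges on producing, after conjugation, a $\theta$-stable reductive subalgebra of $\mathfrak g$ carrying $x$ — equivalently, on the theorem that all maximal $\R$-split tori of $\mathfrak g$ are $G$-conjugate to $\ad_{\mathfrak a}$, which lets one move the split torus generated by $\ad_x$ inside $\ad_{\mathfrak a}$. Once this structural input is granted, the remaining steps are the elementary self-adjoint/skew-adjoint spectral dichotomy and the standard compact maximization argument, both of which are routine.
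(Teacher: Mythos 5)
The paper does not actually prove this proposition -- it is quoted verbatim from Kostant \cite{kos}, so there is no in-house argument to compare against; your proposal is a genuine proof and it is essentially correct. The easy direction (self-adjointness of $\ad_H$ for $H\in\mathfrak{p}$ with respect to $-B(\,\cdot\,,\theta\,\cdot\,)$, plus conjugation-invariance of real semisimplicity), the first-order optimality argument conjugating $\mathfrak{p}$ into $\mathfrak{a}$ (this is exactly Knapp's proof of Theorem 6.51, which the paper cites separately), and the final reduction of $(\Ad_G\cdot x)\cap\mathfrak{a}$ to a $W$-orbit via the two standard conjugacy facts all check out; for the last of these, note that ``$G$-conjugacy between elements of $\mathfrak{p}$ is realized in $K$'' follows quickly from $G=K\exp\mathfrak{p}$ and the observation that $e^{\ad_X}y\in\mathfrak{p}$ with $X,y\in\mathfrak{p}$ forces $e^{2\ad_X}y=y$, hence $[X,y]=0$ by positivity of $e^{2\ad_X}$. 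The one place where you should be more careful is the step producing a $\theta$-stable copy of $\mathfrak{l}=Z_{\mathfrak{g}}(x)$: the Karpelevich--Mostow theorem you are invoking requires $\mathfrak{l}$ to be reductive \emph{in} $\mathfrak{g}$ (i.e.\ $\mathfrak{g}$ completely reducible as an $\mathfrak{l}$-module), not merely reductive as an abstract Lie algebra -- a line spanned by a nilpotent element is abelian but can never be made $\theta$-stable. For the centralizer of a semisimple element this hypothesis does hold, because $\mathfrak{l}$ is the $0$-eigenspace of the semisimple operator $\ad_x$ and the Killing form pairs $\lambda$- and $\mu$-eigenspaces trivially unless $\lambda+\mu=0$, so $B|_{\mathfrak{l}\times\mathfrak{l}}$ is nondegenerate; you should say this explicitly. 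With that caveat, the spectral dichotomy killing $y_{\mathfrak{k}}$ is clean, and your route (centralizer made $\theta$-stable, then split off the compact part) is a legitimate alternative to Kostant's original argument, which instead works through the Iwasawa decomposition and conjugacy of maximal $\R$-diagonalizable subalgebras; your version trades that for the Karpelevich--Mostow input but is otherwise more self-contained at the level of linear algebra.
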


\noindent Since any element in $\mathfrak{p}$ is $K$-conjugate to an element in $\mathfrak{a}$ (\cite{kna} Thm. 6.51), the elements of $\mathfrak{p}$ are real semisimple.
Let $\mathfrak{g}^\C$ denote the complexification of $\mathfrak{g}$.

\begin{prop}\label{propsshasrealeigenvalues}
For any real semisimple $x \in \mathfrak{g}$ and any finite dimensional complex representation $\psi$ of $\mathfrak{g}^\C$ the endomorphism $\psi_x$ is diagonalizable and has real eigenvalues.
\end{prop}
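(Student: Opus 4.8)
The plan is to reduce to the case $x \in \mathfrak{a}$ and then exploit the compact real form of $\mathfrak{g}^\C$. First, by Proposition \ref{propKosKOrbitInA} the element $x$ is $\Ad_G$-conjugate to some $a \in \mathfrak{a} \subseteq \mathfrak{p}$, say $x = \Ad_g(a)$. We may assume $G$ connected (replacing it by its identity component does not change $\Ad_G$), so $\Ad_g$ is a finite product of automorphisms of the form $e^{\ad_y}$ with $y \in \mathfrak{g}$, and hence extends to an inner automorphism of $\mathfrak{g}^\C$. For any finite dimensional representation $\psi$ of $\mathfrak{g}^\C$ one has $\psi_{e^{\ad_y}(z)} = e^{\psi_y} \psi_z e^{-\psi_y}$, so $\psi_x$ and $\psi_a$ are conjugate endomorphisms of the representation space $V$. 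Since diagonalizability and the set of eigenvalues are conjugation invariants, it suffices to treat the case $a \in \mathfrak{p}$.

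Next I would pass to the compact real form $\mathfrak{u} := \mathfrak{k} \oplus i\mathfrak{p}$ of $\mathfrak{g}^\C$. As $\mathfrak{g}^\C$ is semisimple, $\mathfrak{u}$ is a semisimple compact Lie algebra, so the simply connected Lie group $U$ with Lie algebra $\mathfrak{u}$ is compact (Weyl's theorem; see \cite{kna}). Restricting $\psi$ to $\mathfrak{u}$ and integrating yields a representation $\Psi : U \longrightarrow \Aut(V)$ with differential $\psi|_\mathfrak{u}$. Averaging an arbitrary Hermitian inner product on $V$ over $U$ produces a $U$-invariant Hermitian inner product, with respect to which every $\Psi_u$ is unitary; consequently $\psi_w$ is skew-Hermitian for every $w \in \mathfrak{u}$.

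Finally I apply this with $w = ia \in i\mathfrak{p} \subseteq \mathfrak{u}$: the operator $\psi_{ia} = i\,\psi_a$ is skew-Hermitian, hence $\psi_a$ is Hermitian, and a Hermitian operator is diagonalizable with real eigenvalues. This proves the proposition. I do not expect a genuine obstacle here; the only points that require care are the reduction via inner automorphisms (legitimizing the move of $x$ into $\mathfrak{a}$) and the use of Weyl's theorem to guarantee compactness of $U$, both of which are standard structure theory.
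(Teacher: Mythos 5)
Your proof is correct, but it takes a genuinely different route from the paper's. The paper argues directly: since $\ad_x$ is diagonalizable, $x$ lies in a Cartan subalgebra $\mathfrak{h}^\C$ of $\mathfrak{g}^\C$, so $\psi_x$ acts diagonalizably with eigenvalues given by the weights; it then invokes Sugiura's theorem to produce an Iwasawa decomposition $\mathfrak{g} = \mathfrak{k}' \oplus \mathfrak{a}' \oplus \mathfrak{n}'$ with $x \in \mathfrak{a}'$ and $\mathfrak{h}^\C$ standard with respect to it, whence the weights take real values at $x$. You instead reduce to $x \in \mathfrak{p}$ by writing the conjugating automorphism as a product of inner automorphisms $e^{\ad_y}$ and using $\psi_{e^{\ad_y}z} = e^{\psi_y}\psi_z e^{-\psi_y}$, and then run the unitarian trick on the compact real form $\mathfrak{u} = \mathfrak{k} \oplus i\mathfrak{p}$ --- which is precisely the content of the paper's Proposition \ref{propHermitianInnerProduct}, proved there later and independently. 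In effect you obtain Proposition \ref{propsshasrealeigenvalues} as a corollary of Proposition \ref{propHermitianInnerProduct} together with Kostant's conjugacy result; this is more self-contained (no appeal to Sugiura's classification of Cartan subalgebras) at the cost of the conjugation bookkeeping, which you handle correctly since $\Ad_G$ of the connected group consists of inner automorphisms. The paper's route has the side benefit of identifying the eigenvalues explicitly as values of the weights on a suitable $\mathfrak{a}'$, a description it reuses in step (2) of the proof of Theorem \ref{thm1}.
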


\begin{proof}
Since $\ad_x$ is diagonalizable there is a Cartan subalgebra $\mathfrak{h}^\C $ of $\mathfrak{g}^\C$ such that $x \in \mathfrak{h}^\C$.
By \cite{kna} proposition 5.4 the endomorphism $\psi | _{\mathfrak{h}^\C}$ acts diagonalizable on $\mathfrak{p}$. Since $x$ is real semisimple the eigenvalues of $\ad_x$ are real. 
By \cite{sug} Theorem 2 there is an Iwasawa decomposition $\mathfrak{g} = \mathfrak{k}' \oplus \mathfrak{a}' \oplus \mathfrak{n}'$, such that $x \in \mathfrak{a}'$ and $\mathfrak{h}^\C \cap \mathfrak{g} = \mathfrak{a}' \oplus \mathfrak{t}'$ for some abelian subalgebra $\mathfrak{t}' \subseteq \mathfrak{k}'$. (In the notation of \cite{sug} the Cartan subalgebra $\mathfrak{h}^\C$ is standard w.r.t. the standard triple $(\mathfrak{k}', \mathfrak{p}', \mathfrak{a}')$, where $\mathfrak{g} = \mathfrak{k}' \oplus \mathfrak{p}'$ is a Cartan decomposition). Using \cite{kna} proposition 5.4 again, we see that the weights take real values on $\mathfrak{a}'$ and in particular on $x$.
\end{proof}

Combining proposition \ref{propsshasrealeigenvalues} with the fact that elements in $\mathfrak{p}$ are real semisimple, we can see that for any finite dimensional complex (meaning $V$ is a complex vectorspace) representation $\psi : \mathfrak{g}^\C \longrightarrow \End(V)$ and any $x \in \mathfrak{p}$ the eigenvalues of $\psi_x$ are real.

For any $x \in \mathfrak{p}$ let $\max \mu (\psi_x)$ denote the maximal eigenvalue of $\psi_x$. 
In order to show that $\mathcal{O}_x$ is a spectrahedron we will first proof the following theorem:

\begin{thm}\label{thm1}
For $x,y \in \mathfrak{p}$ the following are equivalent:
\begin{enumerate}
\item
$y \in \mathcal{O}_x$,
\item
$\max \mu (\varphi_y) \leq \max \mu (\varphi_x)$ for all fundamental representations $\varphi$ of $\mathfrak{g}^\C$.
\end{enumerate}
\end{thm}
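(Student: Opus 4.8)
\medskip
\noindent\textbf{Proof strategy.}
The two implications will be proved by different means. For $(1)\Rightarrow(2)$, given a fundamental representation $\varphi\colon\mathfrak g^\C\to\End(V)$ I would first fix a Hermitian inner product $h$ on $V$ with respect to which $\varphi_\xi$ is skew-Hermitian for $\xi\in\mathfrak k$ and Hermitian for $\xi\in\mathfrak p$; such an $h$ exists because the restriction of $\varphi$ to the compact real form $\mathfrak k\oplus i\mathfrak p$ of $\mathfrak g^\C$ consists of skew-Hermitian operators for a suitable $h$ (average an arbitrary inner product over the compact simply connected group with that Lie algebra). Then $z\mapsto\max\mu(\varphi_z)$ is the supremum over unit vectors $v$ of the $\R$-linear functionals $z\mapsto h(\varphi_z v,v)$, hence convex on $\mathfrak p$, and it is $\Ad_K$-invariant, because $K$ is connected and $\varphi$ is a Lie algebra homomorphism, so $\varphi_{\Ad_k z}=A\varphi_z A^{-1}$ for a suitable $A\in\Aut(V)$. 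A convex $\Ad_K$-invariant function on $\mathfrak p$ is bounded on $\mathcal O_x=\conv(\Ad_K\cdot x)$ by its constant value $\max\mu(\varphi_x)$ on the orbit; this gives $(1)\Rightarrow(2)$, in fact with ``fundamental'' replaced by ``arbitrary''.

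For $(2)\Rightarrow(1)$ I would reduce to the subspace $\mathfrak a$. Fix a maximal abelian $\mathfrak t\subseteq Z_{\mathfrak k}(\mathfrak a)$, so that $\mathfrak h=\mathfrak a\oplus\mathfrak t$ is a maximally noncompact Cartan subalgebra and $\mathfrak h^\C$ a Cartan subalgebra of $\mathfrak g^\C$, and choose the positive root systems of $(\mathfrak g^\C,\mathfrak h^\C)$ and of $\Sigma(\mathfrak g,\mathfrak a)$ compatibly, so that each simple root of $\mathfrak g^\C$ restricts on $\mathfrak a$ either to $0$ or to a non-negative combination of simple restricted roots; let $\mathfrak a_+$ denote the closed Weyl chamber and use the Killing form to identify $\mathfrak a\cong\mathfrak a^*$ and $\mathfrak h_\R\cong\mathfrak h_\R^*$. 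Two facts are needed. First: for $b\in\mathfrak a_+$ and an irreducible finite dimensional representation $\psi$ of $\mathfrak g^\C$ with highest weight $\omega$, the operator $\psi_b$ is diagonal in the weight decomposition with eigenvalues $\{\nu(b)\}$, the largest of which is $\omega(b)$ (since $\omega-\nu$ is a non-negative integer combination of simple roots, each of which is $\ge0$ at $b$ by compatibility); combining this with Propositions~\ref{propsshasrealeigenvalues} and~\ref{propKosKOrbitInA} and with the conjugacy $\varphi_{\Ad_k z}=A\varphi_z A^{-1}$ yields $\max\mu(\varphi_z)=\omega_\varphi(z_+)$ for every fundamental representation $\varphi$, where $\omega_\varphi$ is its highest weight and $z_+\in\mathfrak a_+$ the element $\Ad_K$-conjugate to $z$. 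Second: if $v\in\mathfrak a$ satisfies $\omega_i(v)\ge0$ for all fundamental weights $\omega_i$ of $\mathfrak g^\C$, then $v$ lies in the cone $\mathcal C$ generated inside $\mathfrak a$ by the simple restricted roots; indeed this condition describes, inside $\mathfrak h_\R$, the cone generated by the simple coroots $\alpha_j^\vee$, equivalently by their Killing-duals $H_{\alpha_j}$, and the orthogonal projection $\pi_{\mathfrak a}$ sends $H_{\alpha_j}$ to the Killing-dual of $\alpha_j|_{\mathfrak a}\in\Sigma^+\cup\{0\}$, which lies in $\mathcal C$. Granting these, assume $(2)$ and put $a=x_+$, $b=y_+$: by the first fact $(2)$ reads $\omega_i(a-b)\ge0$ for all fundamental weights $\omega_i$; by the second, $a-b\in\mathcal C$, i.e. $b\preceq a$ in the dominance order of the reflection group $W$ on $\mathfrak a$; by Rado's theorem $\conv(W\cdot a)\cap\mathfrak a_+=\{c\in\mathfrak a_+ : a-c\in\mathcal C\}$, so $b\in\conv(W\cdot a)\subseteq\conv(\Ad_K\cdot a)=\mathcal O_x$; and since $\mathcal O_x$ is $\Ad_K$-invariant while $y$ is $\Ad_K$-conjugate to $b$, we get $y\in\mathcal O_x$.

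I expect the main obstacle to be the second fact above and, more broadly, the careful bookkeeping of the two Killing-form identifications, of the orthogonal projection $\mathfrak h\to\mathfrak a$, and of how simple roots, fundamental weights and coroots of $\mathfrak g^\C$ behave under restriction to $\mathfrak a$; this is precisely the place where using \emph{all} fundamental representations of $\mathfrak g^\C$, rather than only those whose highest weights restrict to the restricted fundamental weights, makes the argument close. The remaining inputs --- the invariant Hermitian form, the weight-space computation of $\max\mu$, and Rado's theorem for $W$ --- are standard.
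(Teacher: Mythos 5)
Your argument is correct, and it takes a route that is genuinely different from the paper's in one direction and more careful in the other. The paper proves both implications at once through a single chain: conjugate $y$ into the closed chamber, use Kostant's convexity theorem to get $\mathcal{O}_x\cap\mathfrak{a}=\Pi_x$ (Lemma \ref{lemReductionToA}), apply Kostant's Lemma \ref{lemKosKey} to translate membership in $\Pi_x$ into $\lambda_i(x-y)\geq 0$, and identify the $\lambda_i$ with the highest weights of the fundamental representations. You instead prove $(1)\Rightarrow(2)$ softly: with the invariant Hermitian form of Proposition \ref{propHermitianInnerProduct}, $z\mapsto\max\mu(\varphi_z)$ is a supremum of linear functionals, hence convex, and is $\Ad_K$-invariant, so it is bounded on $\conv(\Ad_K\cdot x)$ by its constant value on the orbit; this direction then needs neither Kostant's convexity theorem nor Lemma \ref{lemKosKey} and, as the paper's remark after the theorem also observes, works for arbitrary finite dimensional representations. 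For $(2)\Rightarrow(1)$ your skeleton coincides with the paper's (highest weight realizes the maximal eigenvalue on the closed chamber; the Rado/Kostant description of $\conv(W\cdot a)\cap C$ via the dominance cone), but your ``second fact'' --- that $\omega_i(v)\geq 0$ for all $l$ fundamental weights of $\mathfrak{g}^\C$ forces $v\in\mathfrak{a}$ into the cone spanned by the duals of the simple restricted roots, proved by orthogonally projecting the coroot cone of $i\mathfrak{t}\oplus\mathfrak{a}$ onto $\mathfrak{a}$ and using that each simple root of $\Delta^+$ restricts to $0$ or to a positive restricted root --- supplies exactly the bookkeeping that the paper compresses into the assertion that the highest weights of the fundamental representations are the restricted fundamental weights with $2\left<\lambda_i,\alpha_j\right>/\left|\alpha_j\right|^2=\delta_{ij}$. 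Since $\mathfrak{g}^\C$ has $l\geq k$ fundamental representations whose highest weights restrict to nonnegative combinations of the restricted fundamental weights rather than to those weights themselves, your treatment of this step is the more precise one; you only obtain an implication rather than the paper's equivalence at this point, but your two-implication structure is exactly what makes that sufficient.
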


\begin{rem}
The fundamental representations of the complexification $\mathfrak{g}^\C$ are building blocks for the irreducible representations. One can describe any irreducible finite dimensional complex representation of $\mathfrak{g}^\C$ as a Cartan product (see e.g. \cite{eas}) of finitely many fundamental representations.
The maximal eigenvalue of a Cartan product in a fixed point in $\mathfrak{p}$ is the sum of maximal eigenvalues of the factors.
The second condition in Theorem \ref{thm1} actually holds for all finite dimensional complex representations of $\mathfrak{g}^\C$. 
In order to find a spectrahedral representation though we use the fact that there are only finitely many fundamental representations.
\end{rem}

\begin{proof}
We will prove Theorem \ref{thm1} in 4 steps:\\
(1) \emph{$\Ad_K$-invariance:}
Obviously the first condition of Theorem \ref{thm1} is $\Ad_K$-invariant in $x$ as well as in $y$.
The same is true for the second condition: Let $\tilde{G}$ be the universal covering group of $G$. Let $\psi: \mathfrak{g} \longrightarrow \End(V)$ be a finite dimensional complex representation. There is a representation $\Psi: \tilde{G} \longrightarrow \Aut(V)$ with $\exp \circ \psi = \Psi \circ \exp$.
For $k \in K$, since $G = \tilde{G}/H$ for some subgroup $H$ of the centre $Z_{\tilde{G}}$, we can find an element $g \in \tilde{G}$ ($gH = k$) such that $\Ad_k \cdot x = \Ad_g \cdot x$. Then 
\[
\psi_{\Ad_k \cdot x} = \psi_{\Ad_g \cdot x} = \frac{\rm d}{{\rm d}t} ( \Psi_{\exp t \Ad_g \cdot x})|_{t=0} = \Psi_g \psi_x \Psi_g^{-1}.
\]
 So for any $x \in \mathfrak{p}$ the eigenvalues of $\psi_x$ and $\psi_{\Ad_k \cdot x}$ are the same.\medskip

\noindent(2) \emph{Highest weight gives largest eigenvalue:}
Since both conditions of Theorem \ref{thm1} are $\Ad_K$-invariant in $x$ as well as in $y$ and since every element in $\mathfrak{p}$ is $K$-conjugate to an element in $\mathfrak{a}$ we can assume that $x,y \in \mathfrak{a}$.
Let $\Sigma \subseteq \mathfrak{a}^*$ be the set of restricted roots.
We fix an ordering of $\mathfrak{a}^*$ and get a set $\Sigma^+$ of positive restricted roots and with that a simple system $\{\alpha_1,...,\alpha_k\}$ of the abstract root system $\Sigma$ of $\mathfrak{a}^*$ (\cite{kna} p. 118 and Cor. 6.53).
 The {\it Weyl chambers} are the connected components of 
\[
\mathfrak{a}\setminus \bigcup \limits _{\gamma \in \Sigma} \left\{ x \in \mathfrak{a} : \gamma (x) = 0 \right\}.
\]
By the $\Ad_K$-invariance and using the fact that the Weyl group acts transitively on the Weyl chambers (\cite{hel} Thm. 2.12 or implicitely \cite{kna} Thm. 2.63), we can choose $x$ and $y$ to be in the closed Weyl chamber $C = \{ z \in \mathfrak{a} : \alpha_1(z) \geq 0 ,..., \alpha_k(z) \geq 0\}$.\\
Let $\mathfrak{t}$ be a maximal abelian subspace of the centralizer $Z_\mathfrak{k}(\mathfrak{a}) = \{ k \in \mathfrak{k} : [k,\mathfrak{a}] = 0\}$. Then the complexification $\mathfrak{h}^\C$ of $\mathfrak{h} := \mathfrak{t} \oplus \mathfrak{a}$ is a Cartan subalgebra of $\mathfrak{g}^\C$. 
The restricted roots $\Sigma$ are given by the restrictions of the roots $\Delta$ of $(\mathfrak{g}^\C, \mathfrak{h}^\C)$ that are non-trivial on $\mathfrak{a}$ (\cite{kna} Prop. 6.47). 
The roots in $\Delta$ take real values on $i\mathfrak{t} \oplus \mathfrak{a}$.
The system $\Sigma^+$ of positive restricted roots can be extended to a system $\Delta^+$ of positive roots by choosing a lexicographic ordering on $(i\mathfrak{t} \oplus \mathfrak{a})^*$ which takes $\mathfrak{a}$ before $i\mathfrak{t}$.
The set of positive roots $\Delta^+$ determines a simple system $\{ \beta_1,...,\beta_l\}$ in $\Delta$ which is nonnegative on $C$. \\
Now if $\lambda$ is the highest weight of some representation $\psi: \mathfrak{g} \longrightarrow \End(V)$ then by the theorem of the highest weight (\cite{kna} Thm. 5.5) every weight of $\psi$ is of the form $\lambda - \sum _{i=1}^l n_i \beta_i$ with nonnegative integers $n_i$. Since the $\beta_i$ are by definition nonnegative on $C$ and since the weights are the eigenvalues of $\psi$, $\lambda(x)$ is the largest eigenvalue of $\psi_x$ for every $x \in C$.\medskip

\noindent (3) \emph{Reduction to the momentum polytope:}
In order to change the first condition of Theorem \ref{thm1} we use Kostant's convexity Theorem:

\begin{thm}[\cite{kos} Thm. 8.2]\label{thmKosConvexity}
Let $P: \mathfrak{p} \longrightarrow \mathfrak{a}$ be the orthogonal projection with respect to the inner product that is induced by the Killing form on $\mathfrak{g}$. Then for $x \in \mathfrak{a}$ we have:
\[
P(\Ad_K \cdot x) = \conv(W \cdot x).
\]
\end{thm}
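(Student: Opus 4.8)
The plan is to prove the two inclusions separately. Throughout one may replace $K$ by its image under $\Ad$ in $\Aut(\mathfrak{g})$, which is compact, so that $M:=\Ad_K\cdot x$ is a compact submanifold of $\mathfrak{p}$; identify $\mathfrak{a}$ with $\mathfrak{a}^{*}$ using the Killing form and write $\langle\cdot,\cdot\rangle$ for the induced inner product on $\mathfrak{p}$. For the inclusion $P(M)\subseteq\conv(W\cdot x)$ I would use that a compact convex set is the intersection of its supporting halfspaces, so it is enough to check $\max_{k\in K}\langle\Ad_k x,H\rangle\leq\max_{w\in W}\langle w\cdot x,H\rangle$ for every $H\in\mathfrak{a}$, noting that $\langle P(\Ad_k x),H\rangle=\langle\Ad_k x,H\rangle$ because $P$ is the orthogonal projection onto $\mathfrak{a}$ and $H\in\mathfrak{a}$. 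Both sides are continuous and $W$-invariant in $H$ and the regular locus is dense, so one may take $H$ regular. The function $k\mapsto\langle\Ad_k x,H\rangle$ attains a maximum at some $k_0\in K$ by compactness; differentiating along one-parameter subgroups of $K$ and using $\ad$-invariance of the Killing form shows that $[\Ad_{k_0}x,H]$ is Killing-orthogonal to $\mathfrak{k}$, and since $[\Ad_{k_0}x,H]\in[\mathfrak{p},\mathfrak{p}]\subseteq\mathfrak{k}$ while the Killing form is nondegenerate on $\mathfrak{k}$, one gets $[\Ad_{k_0}x,H]=0$. As $H$ is regular its centralizer in $\mathfrak{p}$ is $\mathfrak{a}$, hence $\Ad_{k_0}x\in\mathfrak{a}$, and then $\Ad_{k_0}x\in W\cdot x$ by Proposition~\ref{propKosKOrbitInA}; the required inequality follows.

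For the reverse inclusion, first note $W\cdot x\subseteq P(M)$, since $P(\Ad_n x)=\Ad_n x=w\cdot x$ whenever $n\in N_K(\mathfrak{a})$ represents $w\in W$. The mechanism for reaching the rest of the polytope is a rank-one \emph{slide lemma}: for every $z\in M$ and every restricted root $\alpha$, the segment $[P(z),s_\alpha P(z)]$ is contained in $P(M)$. To prove it I would let $\mathfrak{g}^{[\alpha]}$ be the subalgebra of $\mathfrak{g}$ generated by the restricted root spaces $\mathfrak{g}_{\pm\alpha}$, take $K_\alpha\subseteq K$ to be the analytic subgroup with Lie algebra $\mathfrak{k}\cap\mathfrak{g}^{[\alpha]}$, and set $V:=\mathfrak{p}\cap\mathfrak{g}^{[\alpha]}$. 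Then $V$ is $K_\alpha$-invariant, $V\cap\mathfrak{a}$ is the line $\ell_\alpha$ on which $s_\alpha$ acts by $-1$, and $\ker\alpha\subseteq\mathfrak{a}$ lies in $V^{\perp}$ and is fixed pointwise by $K_\alpha$; splitting $z=z_V+z_{V^{\perp}}$ accordingly, one gets that $P(\Ad_k z)$ runs, as $k$ varies over $K_\alpha$, inside the affine line $P(z_{V^{\perp}})+\ell_\alpha$, and the rank-one case of the theorem --- where the orbit $\Ad_{K_\alpha}z_V$ is a metric sphere projecting onto a diameter of $\ell_\alpha$ --- shows this trace contains the segment from $P(z)$ to $s_\alpha P(z)$, which therefore lies in $P(\Ad_{K_\alpha}z)\subseteq P(M)$.

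To assemble the whole polytope from these slides, I would first reduce to the case that $x$ is regular: both $P(M)$ and $\conv(W\cdot x)$ depend Hausdorff-continuously on $x$, and $P(M)$ is closed, so the general case follows by approximating $x$ by regular elements. Now fix a simple system $\alpha_1,\dots,\alpha_k$ with closed dominant chamber $C$, and let $p\in\conv(W\cdot x)$; applying a Weyl element and then approximating, one may assume $p$ lies in the relative interior of $\conv(W\cdot x)\cap C$, so that $x$, $p$ and the entire segment $[x,p]$ consist of regular points of the interior of $C$, and $p=x-\sum_i t_i\alpha_i$ with $t_i\geq 0$ (the standard description of $\conv(W\cdot x)\cap C$). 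Set $s^{*}=\sup\{s\in[0,1]:(1-s)x+sp\in P(M)\}$; this supremum is attained, at $q:=(1-s^{*})x+s^{*}p$. If $s^{*}<1$ then $q$ is a regular dominant point, so for each $i$ the reflection $s_{\alpha_i}$ moves $q$ a positive distance along $-\alpha_i$; performing short slides from $q$ successively along $\alpha_1,\dots,\alpha_k$ --- where each slide along $\alpha_i$ only increases the pairings with the remaining simple roots, so that no slide degenerates --- one reaches $q+\delta(p-q)\in P(M)$ for a sufficiently small $\delta>0$, contradicting the maximality of $s^{*}$. Hence $s^{*}=1$ and $p\in P(M)$, and $p\in\overline{P(M)}=P(M)$ once the approximation is removed.

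I expect the main obstacle to be the slide lemma together with this assembly: pinning down the correct $K_\alpha$-invariant rank-one subspace $V$ so that Kostant genuinely reduces to the elementary ``a round sphere projects onto a diameter'' picture, and then controlling the interleaving of slides along the different simple roots --- including the behaviour at the walls of $C$, which is exactly what forces the reduction to regular $x$ --- so that the slides really do sweep out the full polytope rather than some proper subset. A more conceptual but less self-contained alternative, which I would otherwise pursue, realizes $M$ as the fixed-point set of an antisymplectic involution on a compact coadjoint orbit of the compact real form $\mathfrak{u}=\mathfrak{k}\oplus i\mathfrak{p}$ of $\mathfrak{g}^{\C}$, and then deduces the theorem from the Atiyah--Guillemin--Sternberg convexity theorem for torus momentum maps together with Duistermaat's theorem on such fixed-point sets.
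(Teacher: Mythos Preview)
The paper does not prove this theorem at all: it is quoted verbatim as Kostant's result \cite{kos}~Thm.~8.2 and used as a black box (in the proof of Lemma~\ref{lemReductionToA}, in Example~1.10, and in the proof of Theorem~\ref{thmFaceCorresp}). There is therefore no ``paper's own proof'' to compare your proposal against.

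That said, your sketch is essentially a recapitulation of Kostant's original argument. The inclusion $P(\Ad_K\cdot x)\subseteq\conv(W\cdot x)$ via the critical-point computation and Proposition~\ref{propKosKOrbitInA} is exactly Kostant's Lemma~8.1/Proposition~8.1 mechanism. For the reverse inclusion your rank-one slide lemma is Kostant's Lemma~8.3, and the assembly by successive simple-root slides inside the dominant chamber is the content of his \S8.3. Your self-assessment of the obstacles is accurate: the nontrivial points are (i) that $\mathfrak{g}^{[\alpha]}$ really is a rank-one reductive subalgebra with Cartan decomposition inherited from $\mathfrak{g}$, so that $K_\alpha$ fixes $(\mathfrak{p}\cap\mathfrak{g}^{[\alpha]})^{\perp}$ pointwise and the projection of a $K_\alpha$-orbit to $\ell_\alpha$ is a connected $\{1,s_\alpha\}$-invariant set, hence a segment; and (ii) the bookkeeping that the successive short slides along $\alpha_1,\dots,\alpha_k$ stay in the open chamber because $\langle\alpha_i,\alpha_j\rangle\leq 0$ for $i\neq j$. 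Both are handled in \cite{kos}; your outline would go through once those are filled in. The alternative via Atiyah--Guillemin--Sternberg and Duistermaat that you mention is also standard and correct, but postdates Kostant's paper and imports considerably more machinery.
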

 
Kostant's Theorem connects the polar orbitope $\mathcal{O}_x$ to the momentum polytope $\Pi_x := \conv (W \cdot x)$.  
 
\begin{lem}\label{lemReductionToA}
For $x \in \mathfrak{a}$ the equation $\mathcal{O}_x \cap \mathfrak{a} =  \Pi_x$ holds.
\end{lem}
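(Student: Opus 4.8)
The plan is to prove the two inclusions separately, the only substantial input being Kostant's convexity theorem (Theorem \ref{thmKosConvexity}).

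First I would establish $\Pi_x \subseteq \mathcal{O}_x \cap \mathfrak{a}$. By Proposition \ref{propKosKOrbitInA} the Weyl orbit $W \cdot x$ equals $(\Ad_K \cdot x) \cap \mathfrak{a}$, so in particular $W \cdot x \subseteq \mathcal{O}_x$ and $W \cdot x \subseteq \mathfrak{a}$. Since both $\mathcal{O}_x$ and $\mathfrak{a}$ are convex, passing to convex hulls gives $\Pi_x = \conv(W \cdot x) \subseteq \mathcal{O}_x \cap \mathfrak{a}$.

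For the reverse inclusion, let $P \colon \mathfrak{p} \longrightarrow \mathfrak{a}$ be the orthogonal projection from Theorem \ref{thmKosConvexity}. Since $P$ is linear it commutes with the formation of convex hulls, so
\[
P(\mathcal{O}_x) = P\big(\conv(\Ad_K \cdot x)\big) = \conv\big(P(\Ad_K \cdot x)\big) = \conv\big(\conv(W \cdot x)\big) = \Pi_x,
\]
where the third equality is Kostant's convexity theorem. Now if $y \in \mathcal{O}_x \cap \mathfrak{a}$, then $P(y) = y$ because $P$ restricts to the identity on $\mathfrak{a}$; hence $y = P(y) \in P(\mathcal{O}_x) = \Pi_x$. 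This yields $\mathcal{O}_x \cap \mathfrak{a} \subseteq \Pi_x$ and finishes the proof.

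There is no serious obstacle here: the only points that need a moment's attention are that a linear map commutes with $\conv(\cdot)$ and that $P|_{\mathfrak{a}} = \id_{\mathfrak{a}}$, so that points of $\mathcal{O}_x$ already lying in $\mathfrak{a}$ are fixed by the projection. All the real content is packaged into Kostant's convexity theorem, which does the work for the inclusion $\mathcal{O}_x \cap \mathfrak{a} \subseteq \Pi_x$.
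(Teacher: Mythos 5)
Your proof is correct and follows essentially the same route as the paper: the easy inclusion from $W\cdot x\subseteq \Ad_K\cdot x$ (convexity of $\mathcal{O}_x$ and $\mathfrak{a}$), and the reverse inclusion from $y=P(y)\in P(\mathcal{O}_x)=\conv(P(\Ad_K\cdot x))=\Pi_x$ via Kostant's convexity theorem. The extra details you spell out (linearity of $P$ commuting with $\conv$, $P|_{\mathfrak{a}}=\id_{\mathfrak{a}}$) are exactly what the paper uses implicitly.
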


\begin{proof}
By definition $W \cdot x \subseteq \Ad_K \cdot x$, so we have $\Pi_x \subseteq \conv(\Ad_K \cdot x) \cap \mathfrak{a}$. 
On the other hand let $y \in \mathcal{O}_x \cap \mathfrak{a}$. Then by Theorem \ref{thmKosConvexity} we have $y = P(y) \in P( \mathcal{O}_x) = \conv(P(\Ad_K \cdot x)) = \Pi_x$.
\end{proof}

\noindent(4) \emph{Final step:}
Let $x_1,...,x_n \in \mathfrak{a}$ be chosen such that $ \left< x_i, \cdot \right> = \alpha_i$, where $\left< \cdot, \cdot \right>$ denotes the inner product on $\mathfrak{a}$ induced by the Killing form.
Let $\lambda_1,...,\lambda_k \in \mathfrak{a}^*$ be the highest weights of the fundamental representations. 
The $\lambda_i$ are given by $2\left<\lambda_i, \alpha_j\right>_{\mathfrak{a}^*}/\left|\alpha_j\right|^2 = \delta_{ij}$ where $\left< \cdot, \cdot \right>_{\mathfrak{a}^*}$ denotes the inner product on $\mathfrak{a}^*$ that is induced by $\left< \cdot, \cdot \right>$ and the isomorphism $\mathfrak{a} \longrightarrow \mathfrak{a}^*, z \mapsto \left< z, \cdot \right>$. Then we have $\cone(x_i: i=1,...,k) = \{ z \in \mathfrak{a}: \lambda_1 (z) \geq 0,..., \lambda_k (z) \geq 0\}$. Kostant proved:

\begin{lem}[\cite{kos} Lem. 3.3]\label{lemKosKey}
Let $y \in C$, then $y \in \Pi_x$ iff $x-y \in \cone (x_i : i=1,...,k).$
\end{lem}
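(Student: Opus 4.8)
The plan is to reduce the statement to a purely Weyl-group-combinatorial fact about the polytope $\Pi_x=\conv(W\cdot x)$ and then to apply convex duality for the simplicial cone $C$. Since $\mathfrak{g}$ is semisimple, the restricted roots span $\mathfrak{a}^*$, so the simple system $\alpha_1,\dots,\alpha_k$ is a basis of $\mathfrak{a}^*$ and the vectors $x_1,\dots,x_k$ (defined by $\langle x_i,\cdot\rangle=\alpha_i$) form a basis of $\mathfrak{a}$. Hence $C=\{z\in\mathfrak{a}:\langle x_i,z\rangle\ge 0\ \text{for}\ i=1,\dots,k\}$ is a simplicial cone with inner normals $x_1,\dots,x_k$, and a short Farkas-type argument (test a candidate $\xi=\sum c_ix_i$ against the basis dual to $x_1,\dots,x_k$) identifies its dual cone as $C^{\vee}=\cone(x_i:i=1,\dots,k)$. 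Since $x-y\in C^{\vee}$ means precisely $\langle\xi,x-y\rangle\ge 0$ for all $\xi\in C$, for $x,y\in C$ the lemma is equivalent to
\[
y\in\Pi_x\ \Longleftrightarrow\ \langle\xi,x-y\rangle\ge 0\ \text{ for all }\ \xi\in C.
\]

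The one substantial ingredient is the classical sub-lemma: for $z\in C$, $\xi\in C$ and $w\in W$ one has $\langle\xi,wz\rangle\le\langle\xi,z\rangle$, equivalently $z-wz\in\cone(x_i:i=1,\dots,k)$. I would prove this by induction on the length $\ell(w)$. The case $w=e$ is trivial; if $\ell(w)>0$, write $w=s_iw'$ with $\alpha_i$ simple and $\ell(w')=\ell(w)-1$, so that $wz=s_i(w'z)=w'z-\alpha_i(w'z)\,\alpha_i^{\vee}$ and therefore $z-wz=(z-w'z)+\alpha_i(w'z)\,\alpha_i^{\vee}$. Here $z-w'z\in\cone(x_i)$ by induction, $\alpha_i^{\vee}=\tfrac{2}{|\alpha_i|^2}x_i$ is a positive multiple of $x_i$, and $\alpha_i(w'z)=\langle(w')^{-1}x_i,z\rangle\ge 0$ because $\ell(s_iw')>\ell(w')$ forces $(w')^{-1}\alpha_i$ to be a positive root, hence $(w')^{-1}x_i$ a nonnegative combination of $x_1,\dots,x_k$, all of which are nonnegative on $C$. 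Pairing $z-wz\in\cone(x_i)$ with $\xi\in C$ then gives $\langle\xi,z-wz\rangle\ge 0$.

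Granting the sub-lemma, the forward implication of the displayed equivalence is immediate: writing $y\in\Pi_x$ as a convex combination $y=\sum_w c_w\,wx$ of the finite orbit, for $\xi\in C$ one gets $\langle\xi,y\rangle=\sum_w c_w\langle\xi,wx\rangle\le\sum_w c_w\langle\xi,x\rangle=\langle\xi,x\rangle$. For the converse I would argue by contraposition. If $y\notin\Pi_x$ then, since $\Pi_x$ is a compact polytope, separation yields $\xi_0\in\mathfrak{a}$ with $\langle\xi_0,y\rangle>\langle\xi_0,wx\rangle$ for every $w\in W$. Pick $w_1\in W$ with $\xi:=w_1\xi_0\in C$ (every $W$-orbit in $\mathfrak{a}$ meets the closed chamber $C$). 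Applying the isometry $w_1$ to both arguments and reindexing the orbit, the inequalities become $\langle\xi,w_1y\rangle>\langle\xi,wx\rangle$ for all $w\in W$; the case $w=e$ gives $\langle\xi,w_1y\rangle>\langle\xi,x\rangle$, and combining with $\langle\xi,w_1y\rangle\le\langle\xi,y\rangle$ from the sub-lemma ($\xi,y\in C$) produces $\langle\xi,x-y\rangle<0$ with $\xi\in C$. This establishes the equivalence, and with it the lemma.

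I expect the only genuinely non-formal point to be the sub-lemma — that a closed-chamber vector maximizes $\langle\xi,\cdot\rangle$ over its $W$-orbit whenever $\xi\in C$, equivalently that $z-wz$ is a nonnegative combination of simple restricted roots for $z\in C$. This is standard, but it is the step that actually uses the structure of the (restricted) root system, through the length induction above; the remainder is convex duality for the simplicial cone $C$ together with a one-line separation argument, both purely formal.
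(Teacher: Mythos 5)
Your proof is correct. Note that the paper itself offers no proof of this lemma --- it is imported verbatim as Kostant's Lemma~3.3 --- so there is nothing internal to compare against; what you have written is essentially the standard (and essentially Kostant's own) argument. The two pillars both check out: the identification $C^{\vee}=\cone(x_i:i=1,\dots,k)$ via testing $\sum_i c_ix_i$ against the basis of $\mathfrak{a}$ dual to $\{\alpha_i\}$ (legitimate because semisimplicity makes the simple restricted roots a basis of $\mathfrak{a}^*$, so $C$ is simplicial), and the length induction showing $z-wz\in\cone(x_i)$ for $z\in C$, where the key inequality $\alpha_i(w'z)\ge 0$ follows correctly from $\ell(s_iw')>\ell(w')\Leftrightarrow (w')^{-1}\alpha_i\in\Sigma^+$. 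This all goes through verbatim for a possibly non-reduced restricted root system ($BC_r$), since its Weyl group, simple system and length function behave as in the reduced case. Two small points worth making explicit if you write this up: the lemma tacitly assumes $x\in C$ as well (the paper arranges this in step~(2) of the proof of Theorem~\ref{thm1}, and your forward direction genuinely uses it), and the final contraposition needs $\Pi_x$ closed to get strict separation, which holds since it is the convex hull of a finite set.
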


Combining Lemma \ref{lemReductionToA} and Lemma \ref{lemKosKey} we see that $y \in \mathcal{O}_x$ iff $\lambda_i(x-y) \geq 0$ for all $i=1,...,k$. Which is equivalent to $\lambda_i(x) \geq \lambda_i (y)$ for all $i = 1,...,k$. 
Since the highest weights give the largest eigenvalue of their representations when evaluated on $C$ this finishes the proof of Theorem \ref{thm1}.

\end{proof}

\begin{prop} \label{propHermitianInnerProduct}
Let $\mathfrak{g}$ be a real semisimple Lie algebra and let $\psi: \mathfrak{g}^\C \longrightarrow \End(V)$ be a finite dimensional complex representation. Then there is a hermitian inner product on $V$ such that for every $x \in \mathfrak{p}$ the endomorphism $\psi_x$ on $V$ is hermitian.
\end{prop}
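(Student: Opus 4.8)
The plan is to apply Weyl's unitary trick by passing to the compact real form. First I would introduce $\mathfrak{u} := \mathfrak{k} \oplus i\mathfrak{p} \subseteq \mathfrak{g}^\C$, the compact real form of $\mathfrak{g}^\C$ attached to the fixed Cartan decomposition. Since $\mathfrak{g}$ is semisimple, so is $\mathfrak{g}^\C$ and hence so is $\mathfrak{u}$, and the Killing form of $\mathfrak{u}$ is negative definite; thus $\mathfrak{u}$ is a semisimple Lie algebra of compact type (see \cite{kna}). Restricting $\psi$ along the inclusion $\mathfrak{u} \hookrightarrow \mathfrak{g}^\C$ yields a finite dimensional complex representation $\psi|_\mathfrak{u}$ of $\mathfrak{u}$.

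The next step is to integrate and average. Let $\tilde{U}$ be the simply connected Lie group with Lie algebra $\mathfrak{u}$. Because $\mathfrak{u}$ is semisimple of compact type, $\tilde{U}$ is compact (Weyl's theorem), and $\psi|_\mathfrak{u}$ exponentiates to a representation $\Psi : \tilde{U} \longrightarrow \Aut(V)$ with $\Psi \circ \exp = \exp \circ\, \psi|_\mathfrak{u}$. Starting from an arbitrary hermitian inner product on $V$ and averaging it against the Haar measure of $\tilde{U}$ produces a $\tilde{U}$-invariant hermitian inner product $\left< \cdot, \cdot \right>$; with respect to this product every operator $\Psi_g$, $g \in \tilde{U}$, is unitary.

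Differentiating the curve $t \mapsto \Psi_{\exp(tu)}$ at $t = 0$ then shows that $\psi_u$ is skew-hermitian for every $u \in \mathfrak{u}$. In particular, for any $x \in \mathfrak{p}$ we have $ix \in i\mathfrak{p} \subseteq \mathfrak{u}$, so $\psi_{ix} = i\,\psi_x$ is skew-hermitian, which says precisely that $\psi_x$ is hermitian — exactly the claim.

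The only genuinely substantive point is the compactness of $\tilde{U}$: this rests on the theorem that the simply connected Lie group of a semisimple Lie algebra of compact type is compact, together with the verification that $\mathfrak{u}$ is indeed of compact type (which is where semisimplicity of $\mathfrak{g}$ enters). Everything after that is the routine averaging argument, so the proposition is essentially the familiar fact that finite dimensional complex representations of a compact Lie algebra are unitarizable, specialized to $\psi|_\mathfrak{u}$ and transported to $\mathfrak{p}$ via $x \mapsto ix$.
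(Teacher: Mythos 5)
Your argument is correct and is essentially identical to the paper's proof: both pass to the compact real form $\mathfrak{u} = \mathfrak{k} \oplus i\mathfrak{p}$, use compactness of the associated simply connected group to average a hermitian inner product over Haar measure, and then differentiate the unitarity of $\Psi_{\exp(tix)}$ at $t=0$ to conclude that $\psi_x$ is self-adjoint. No gaps.
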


\begin{proof}
The Lie algebra $\mathfrak{u} = \mathfrak{k} \oplus i \mathfrak{p}$ is the compact real form of $\mathfrak{g}^\C$ (\cite{kna} p. 292). 
Since $\mathfrak{u}$ is compact the complex conjugation with respect to $\mathfrak{u}$ is a Cartan involution on the real semisimple Lie algebra $(\mathfrak{g}^\C )^\R= \mathfrak{u} \oplus i\mathfrak{u}$.
Since $\mathfrak{u}$ is compact and semisimple the simply connected Lie group $U$ belonging to $\mathfrak{u}$ is compact.
Let $\Psi: U \longrightarrow \Aut(V)$ be the representation corresponding  to the restriction of $\psi$ to $\mathfrak{u}$.
Further let $\left< \cdot , \cdot \right>_V$ be any hermitian inner product on $V$. Using the Haar measure we can define a $U$-invariant inner product on $V$:
\[
\left< v,w \right> _U := \int_{u \in U} \left< \Psi_u v, \Psi_u w \right>_V du.
\]
Let $x \in \mathfrak{p}$. Differentiating the equation $\left< \Psi_{e^{tix}} v, \Psi_{e^{tix}} w \right>_{U} = \left< v, w \right>_{U}$ and evaluating in $t=0$ we get $
\left< \psi_{ix} v, w \right>_{U} + \left< v, \psi_{ix} w \right>_{U} = 0,$
which is equivalent to $\left< \psi_{x} v, w \right>_{U} = \left< v, \psi_{x} w \right>_{U}$. So $\psi_x$ is self-adjoint.
\end{proof}

Using Theorem \ref{thm1} and interpreting $\varphi_z$ (for representations $\varphi : \mathfrak{g}^\C \longrightarrow \End(V)$ and $z \in \mathfrak{p})$ as matrices by fixing a basis and taking the inner product given by Proposition \ref{propHermitianInnerProduct} we get:

\begin{thm}\label{thmPolarIsSpectra}
For every $x \in \mathfrak{p}$ the polar orbitope $\mathcal{O}_x$ is a spectrahedron given as
\[
\left\{ y \in \mathfrak{p}: \max \lambda(\varphi_x) I_{\dim \varphi} - \varphi_y \succeq 0 \hspace{5pt}{\rm for \hspace{3pt} all \hspace{3pt} fundamental \hspace{3pt} representations}\hspace{3pt} \varphi \hspace{3pt}{\rm of}\hspace{3pt} \mathfrak{g}^\mathbb{C} \right\}.
\]
\end{thm}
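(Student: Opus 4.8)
\emph{Proof strategy.} The plan is to recast the eigenvalue inequalities of Theorem \ref{thm1} as linear matrix inequalities and then assemble the finitely many of them into a single one. Fix a fundamental representation $\varphi : \mathfrak{g}^\C \longrightarrow \End(V)$. By Proposition \ref{propHermitianInnerProduct} there is a hermitian inner product on $V$ for which $\varphi_z$ is self-adjoint for every $z \in \mathfrak{p}$; fixing an orthonormal basis of $V$ with respect to this product identifies each $\varphi_z$, $z \in \mathfrak{p}$, with a Hermitian matrix of size $\dim\varphi$. The assignment $z \mapsto \varphi_z$ is $\R$-linear, being the restriction to $\mathfrak{p}$ of the Lie algebra homomorphism $\varphi$, so for fixed $x \in \mathfrak{p}$ the map $y \mapsto \max\lambda(\varphi_x)\, I_{\dim\varphi} - \varphi_y$ is an affine-linear map from $\mathfrak{p}$ into the space of Hermitian matrices.

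Next I would invoke the elementary spectral fact that for a Hermitian matrix $M$ and a real number $c$ one has $cI - M \succeq 0$ if and only if every eigenvalue of $M$ is at most $c$. This applies to $M = \varphi_y$ precisely because Proposition \ref{propHermitianInnerProduct} makes it Hermitian (and its eigenvalues are real also by Proposition \ref{propsshasrealeigenvalues}). Taking $c = \max\lambda(\varphi_x) = \max\mu(\varphi_x)$, the inequality $\max\mu(\varphi_y) \le \max\mu(\varphi_x)$ becomes equivalent to $\max\lambda(\varphi_x)\, I_{\dim\varphi} - \varphi_y \succeq 0$. Running this over all fundamental representations $\varphi$ and using the equivalence (1)$\,\Leftrightarrow\,$(2) of Theorem \ref{thm1} yields exactly the set-theoretic identity in the statement.

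It remains to see that this description realises $\mathcal{O}_x$ as a spectrahedron. Because $\mathfrak{g}^\C$ is semisimple of finite rank there are only finitely many fundamental representations $\varphi_1,\dots,\varphi_k$, and requiring each pencil $\max\lambda\bigl((\varphi_i)_x\bigr)\, I_{\dim\varphi_i} - (\varphi_i)_y$ to be positive semidefinite is equivalent to positive semidefiniteness of the single block-diagonal pencil $\bigoplus_{i=1}^{k} \bigl( \max\lambda\bigl((\varphi_i)_x\bigr)\, I_{\dim\varphi_i} - (\varphi_i)_y \bigr)$, which is still affine-linear in $y \in \mathfrak{p}$. If one insists on real symmetric rather than Hermitian matrices, one applies the standard realification $A + iB \mapsto \left(\begin{smallmatrix} A & -B \\ B & A \end{smallmatrix}\right)$, which is $\R$-linear and turns a Hermitian positive semidefinite matrix into a real symmetric positive semidefinite one; this exhibits $\mathcal{O}_x$ as the solution set of a real symmetric linear matrix inequality.

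The argument is mostly a bookkeeping wrap-up of Theorem \ref{thm1}; the only delicate point is ensuring that the operator whose maximal eigenvalue we bound is genuinely Hermitian, so that ``largest eigenvalue $\le c$'' and ``$cI - M \succeq 0$'' say the same thing. That is exactly why Proposition \ref{propHermitianInnerProduct} — and hence the passage to a $U$-invariant inner product — is needed, rather than working with an arbitrary inner product on $V$.
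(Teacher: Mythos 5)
Your proposal is correct and follows exactly the route the paper takes: the paper derives Theorem \ref{thmPolarIsSpectra} in one sentence by combining the equivalence of Theorem \ref{thm1} with the Hermitian structure from Proposition \ref{propHermitianInnerProduct}, and your write-up simply makes explicit the same bookkeeping (linearity of $y \mapsto \varphi_y$, finiteness of the set of fundamental representations, block-diagonal assembly, and the realification that the paper relegates to the remark following the theorem). No gaps.
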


\begin{rem}
The matrices $\varphi_y$ in Theorem \ref{thmPolarIsSpectra} might have nonreal entries but there is an easy way to convert this representation of $\mathcal{O}_x$ into a representation with real matrices of twice the size. Let $S = \{ x \in \R^n : A(x) \succeq 0\}$, where $A(x) = A_0 + x_1 A_1 + ... + x_n A_n$ with hermitian matrices $A_i$. Then $S$ is also given as
\[
S = \left\{ x \in \R^n : 
\left(\begin{array}{cc}
\re(A(x)) & \im(A(x)) \\
-\im(A(x)) & \re(A(x))
\end{array}\right)
 \succeq 0\right\}.
\]
\end{rem}

Since every face of a spectrahedron is exposed, Theorem \ref{thmPolarIsSpectra} implies that every face of a polar orbitope is exposed. This fact was proved in a different way in \cite{bgh2}.
Theorem \ref{thmPolarIsSpectra} is a generalization of the corresponding results in \cite{sss} about the symmetric and skew symmetric Schur-Horn orbitopes and the Fan orbitope. The setup in \cite{sss} is slightly different but the results are implied by Theorem \ref{thmPolarIsSpectra}.

\begin{rem}
\emph{Schur-Horn orbitopes:}
(1) In order to see how Theorem \ref{thmPolarIsSpectra} generalizes \cite{sss} Thm. 3.4 for the symmetric Schur-Horn orbitope, we study the real semisimple Lie algebra $\mathfrak{g} = \mathfrak{sl}_n (\R)$ = $\Mat_n(\R) \cap \{ \tr = 0 \}$.
 Decomposing $\mathfrak{g}$ into the skew symmetric and the symmetric part gives a Cartan decomposition. The special orthogonal group $\SO(n)$ acts on the symmetric matrices in $\mathfrak{sl}_n(\R)$ by conjugation (adjoint representation).
In \cite{sss} section 3.1 the orthogonal group $\Ort(n)$, acting by conjugation on the set of real symmetric matrices $\Sym_n(\R)$ (of any trace), is considered instead. 
Let $M \in \Sym_n(\R)$ then $M_0 := M - \frac{\tr(M)}{n} I_n$ has trace $0$ and the orbits $\Ort(n) \cdot M_0$ and $\SO(n) \cdot M_0$ coincide. 
By Theorem \ref{thmPolarIsSpectra} the orbitope $\conv(\SO(n) \cdot M_0)$ is a spectrahedron. Since $\conv(\Ort(n) \cdot M)$ is just a translation by $\frac{\tr(M)}{n} I_n$ it is a spectrahedron as well.\medskip

\noindent(2) Next we consider the Lie algebra $\mathfrak{g} = \mathfrak{so}_n(\C)$ as a real semisimple Lie algebra with Cartan  decomposition $\mathfrak{g} =  \mathfrak{so}_n(\R) \oplus i \hspace{1pt} \mathfrak{so}_n(\R)$ (where $i := \sqrt{-1}$). Again $\SO(n)$ is the acting group and acts on $i \hspace{1pt} \mathfrak{so}_n(\R)$ by conjugation. The skewsymmetric Schur-Horn orbitope is the convex hull of such an orbit.
The authors of \cite{sss} consider the group $\Ort(n)$ instead.
If $n$ is even it makes a difference whether the acting group is $\SO(n)$ or $\Ort(n)$. But with a similar trick as for the Fan orbitope (next example), by adding a $0$ column and row, one can see that Theorem \ref{thmPolarIsSpectra} implies \cite{sss} Theorem 3.15.
\end{rem}

The semisimple Lie groups are locally direct products of simple Lie groups. There exist complete tables of simple Lie groups and a lot of their properties are listed (see e.g. \cite{sug} or \cite{tit}). We use these tables (and \cite{fh}) to discuss two examples in more detail:

\begin{exm}
\emph{Fan orbitope (see also \cite{sss} Section 4.3) :}
We take a look at the real semisimple Lie algebra $\mathfrak{g} = \mathfrak{so}_{m,n} = \{ X \in \mathfrak{gl}_{m+n}(\mathbb{R}) : X^T I_{m,n} + I_{m,n} X = 0 \}$, where 
\[
I_{m,n} :=  \left( \begin{array}{cc} I_m & 0 \\ 0 & -I_n \end{array} \right) 
\]
and $m+n \geq 3$. The Lie algebra $\mathfrak{so}_{m,n}(\R)$ may also be given as
\[
\left\{ \left( 
\begin{array}{cc} 
A & B \\
B^T & C
\end{array} \right) \in \mathfrak{gl}_{m+n}(\mathbb{R}) : A \in \mathfrak{so}_m (\R), C  \in \mathfrak{so}_n (\R), B \in \Mat_{m,n}(\R)  \right\}.
\]
The involution $\theta: X \mapsto -X^T$ is a Cartan involution on $\mathfrak{so}_{m,n}(\R)$ and the corresponding Cartan decomposition is
\[
\hspace{19pt} \mathfrak{so}_{m,n}(\R) = \mathfrak{k} \oplus \mathfrak{p} = \left\{\left( 
\begin{array}{cc} 
A & 0 \\
0 & C
\end{array} \right)
 \in \mathfrak{so}_{m,n}(\R)\right\} \oplus \left\{\left( 
\begin{array}{cc} 
0 & B \\
B^T & 0
\end{array} \right)
 \in \mathfrak{so}_{m,n}(\R)\right\}.
\]
One choice for the maximal abelian subspace $\mathfrak{a}$ of $\mathfrak{p}$ is 
\[
\mathfrak{a} =  \left\{\left( \begin{array}{cc} 
0 & D \\
D^T & 0
\end{array} \right)
 \in \mathfrak{so}_{m,n}(\R): D \in \Mat_{m,n}(\R), D = (d_{pq})_{p,q}, d_{pq} = 0 \text{ for } p \neq q  \right\}.
\]
The maximal compact subgroup of $G$ is $K = \SO(m) \times \SO(n)$. 
The complexification $\mathfrak{so}_{m,n}(\R) ^\C$ is isomorphic to $ \mathfrak{so}_{m+n}(\C)$ via conjugation by 
\[
J = \left( \begin{array}{cc} 
iI_m & 0 \\
0 & I_n
\end{array} \right), \hspace{3pt }{\rm i.e.} \hspace{3pt}
X = \left( \begin{array}{cc} 
A & B \\
B^T & C
\end{array} \right) 
\mapsto JXJ^{-1} = 
\left( \begin{array}{cc} 
A & iB \\
-iB^T & C
\end{array} \right) ,
\]
where $i := \sqrt{-1}$. What the fundamental representations of $\mathfrak{so}_{m+n}(\C)$ look like depends on whether $m+n$ is odd or even.
In the odd case $\mathfrak{so}_{m+n}(\C)$ is of type $B_r$, where $2r+1 = m+n$. The fundamental representations are the exterior powers $\varphi_p = \bigwedge^p \varphi_1 : \mathfrak{g}^\C \longrightarrow \End (\bigwedge ^p \C^{m+n})$ of the natural representation $\varphi_1 : \mathfrak{g}^\C \longrightarrow \End(\C^{m+n}), M \mapsto (v \mapsto Mv)$ for $p=1,...,r-1$ and the spin representation $\varphi_{r}$. 
The corresponding highest weights are given by $L_1+...+L_p$ for $p \leq r-1$ and $(L_1+...+L_r)/2$, where 
\[
L_j : \mathfrak{a} \longrightarrow \R, \left( 
\begin{array}{cc} 
0 & D \\
D^T & 0
\end{array} \right) \mapsto (e_j^n )^T D e_j^m
\]
maps to the $j$-th diagonal entry of $D$ for $j \leq \min(m,n)$ and maps to $0$ for $j > \min(m,n)$. So the maximal eigenvalue of a representation $\varphi_p$ when evaluated in an element $x = {0 \hspace{3pt} B \choose B^T 0} \in \mathfrak{p}$
is the sum (or in the case $p=r$ half the sum) of the $p$ largest singular values of $B$ (observe that $B$ is not a square matrix, so all the singular values will be achieved). Since the highest weight only differs by the factor 2, we can use the $r$-th exterior power instead of the spin representation. Assume $m>n$ and let $s_1\geq ...\geq s_n\geq 0$ be the singular values of $B$. Then the polar orbitope $\mathcal{O}_x$ is given as 
\begin{equation}\tag{1.1}\label{SpecRep}
\left\{ y \in \mathfrak{p}: (s_1+...+s_p) I_{\dim \varphi_p} - \left(\bigwedge\nolimits^p\varphi_1\right)_y \succeq 0 \hspace{5pt}{\rm for \hspace{3pt} all} \hspace{3pt} p=1,...,r \right\}.
\end{equation}\\
In case $n+m$ is even $\mathfrak{so}_{m+n}(\C)$ is of type $D_r$, where $2r = n+m$. 
The first $r-2$ fundamental representations $\varphi_1,...,\varphi_{r-2}$ are again the exterior powers of the natural representation. The two other fundamental representations $\varphi_{r-1}, \varphi_r$ are the halfspin representations.
 The highest weights of the halfspin representations are $(L_1 + ... + L_{r-1} - L_r)/2$ and $(L_1 + ... + L_{r})/2$.
The maximal eigenvalues of the fundamental representations on $\mathfrak{p}$ are again given as sums (and in one case a difference) of the singular values of the $B$ part. The spectrahedral representation looks very similar to (\ref{SpecRep}), but one has to differentiate between the exterior powers and the halfspin representations.\\
In \cite{sss} the authors study the group $\Ort(n) \times \Ort(n)$ acting on $\Mat_{n}(\R)$ by $(g,h) \cdot B = gBh^T$ which comes down to the adjoint action (conjugation) of ${g \hspace{3pt} 0 \choose 0 \hspace{3pt} h}$ on ${0 \hspace{3pt} B \choose B^T 0}$. 
For every $B \in \Mat_{n}(\R)$ there exist $g,h \in \Ort(n)$ such that $gBh^T$ is diagonal, with entries being the singular values of $B$ (singular value decomposition).
Let $G = \Ort(n) \times \Ort(n)$ and $H = \SO(n) \times \SO(n)$, both groups act on $\Mat_n(\R)$ via $(g,h) \cdot B = gBh^T$.
If $B$ is regular, the orbits $G \cdot B$ and $H \cdot B$ are different.
But we can embed $\Mat_n(\R)$ into $\Mat_{2n+1}(\R)$ via
\[B \mapsto B_0 := 
\left( 
\begin{array}{cc|c} 
0 & 0 & B \\
0 & 0 & 0 \\ \hline \rule{0pt}{2.6ex}
B^T & 0 & 0
\end{array} \right) \in \mathfrak{p} \subseteq \mathfrak{so}_{n+1,n} \subseteq \Mat_{2n+1}(\R),
\]
where $\mathfrak{so}_{n+1,n} = \mathfrak{k} \oplus \mathfrak{p}$ as above.
Let $B,M \in \Mat_{n}(\R)$. We write $C_M := \conv \left(G \cdot M\right)$ and $\mathcal{O}_{M_0} := \conv \left(K \cdot M_0\right)$, where $K := \SO(n+1) \times \SO(n)$ acts on $\mathfrak{p}$ by conjugation.
Without loss of generality we assume that $M$ is diagonal.
Using the embedding $B \mapsto B_0$, we claim that $C_M = \mathcal{O}_{M_0} \cap \Mat_n (\R)$.\\
Let $B$ be in the orbit $G \cdot M$. Then $B$ has the same singular values as $M$ and $B_0$ has the same eigenvalues as $M_0$. Due to the $0$-row/column $B_0$ having the same eigenvalue as $M_0$ is equivalent to $B_0 \in K \cdot M_0$. So $B \in \mathcal{O}_{M_0} \cap \Mat_n (\R)$ for every $B \in G \cdot M$. Since $\mathcal{O}_{M_0} \cap \Mat_n (\R)$ is convex we have $C_M \subseteq \mathcal{O}_{M_0} \cap \Mat_n (\R)$.\\
On the other hand let $B_0 \in \mathcal{O}_{M_0} \cap \Mat_n (\R)$. 
Then $B_0$ is $K$-conjugate to an element $D_0 \in \mathcal{O}_{M_0} \cap \Mat_n (\R)$, where $D \in \Mat_n(\R)$ is diagonal (\cite{kna} Thm. 6.51 with $\mathfrak{a}$ as above).
The matrices $B_0$ and $D_0$ have the same eigenvalues and by Kostant's convexity Theorem \ref{thmKosConvexity} we have $D_0 = P(D_0) \in \conv( W \cdot M_0)$. 
The Weyl group $W$ acts on $M_0$ by permuting and changing signs (due to the $0$-row/column not just even sign changes) of the diagonal entries of $M$.
So $W \cdot M_0 \subseteq C_M$ and we have $D_0 \in \conv( W \cdot M_0) \subseteq C_M$.
Since $D_0$ and $B_0$ have the same eigenvalues, $D$ and $B$ have the same singular values, so $B \in C_M$.
We see that $C_M = \mathcal{O}_{M_0} \cap \Mat_{n}(\R)$ is a spectrahedron.\medskip
\end{exm}

\begin{exm}
\noindent Next we consider the Lie algebra 
\[
\mathfrak{g} = \mathfrak{sl}_{n+1}(\mathbb{H}) =
 \left\{ \left( 
\begin{array}{cc} 
A & B \\
-\overline{B} & \overline{A}
\end{array} \right) \in \mathfrak{gl}_{{n+1}}(\C) : A, B \in \mathfrak{gl}_m(\C),  \re(\tr(A)) = 0 \right\},
\]
where $n+1 = 2m \geq 4$ is an even integer.
As Cartan involution we choose $\theta$ to be the negative conjugate transpose, i.e. $\theta(X) = -\overline{X}^T$. The resulting Cartan decomposition is $\mathfrak{sl}_{n+1}(\mathbb{H}) = \mathfrak{k} \oplus \mathfrak{p}$, where
\[
\mathfrak{k} = \left\{\left( 
\begin{array}{cc} 
A & B \\
-\overline{B} & \overline{A}
\end{array} \right) \in \mathfrak{sl}_{n+1}(\mathbb{H}): A = -\overline{A}^T, B = B^T \right\} ,
\]
and
\[
\mathfrak{p}=
\left\{\left( 
\begin{array}{cc} 
A & B \\
-\overline{B} & \overline{A}
\end{array} \right) \in \mathfrak{sl}_{n+1}(\mathbb{H}): A = \overline{A}^T, \tr(A) = 0, B = -B^T \right\} .
\]
The real diagonal matrices
\[
\mathfrak{a} = \left\{\left( 
\begin{array}{cc} 
D & 0 \\
0 & D
\end{array} \right) \in \mathfrak{gl}_{n+1}(\R): \tr (D) = 0, D = (d_{pq})_{p,q}, d_{pq} = 0 \hspace{3pt} {\rm for} \hspace{3pt} p \neq q \right\}.
\]
form a maximal abelian subspace of $\mathfrak{p}$. The maximal compact subgroup $K$ of $G = \SL_{m}(\mathbb{H})$ is the compact symplectic group $\Sp(m)$.
The complexification of $\mathfrak{sl}_{n+1}(\mathbb{H})$ is $\mathfrak{sl}_{n+1}(\C)$. Its fundamental representations are the exterior powers $\varphi _p = \bigwedge ^p \varphi_1$ of the natural representation $\varphi_1$, for $p=1,...,n$. 
Let $x \in \mathfrak{p}$ then the eigenvalues of $(\varphi_i)_x$ are just the sums of $p$ eigenvalues of $x$. The Weyl group acts on $\mathfrak{a}$ as permutations of the diagonal entries of $D$.
 Taking $s_1 \geq ... \geq s_{r}$ to be the eigenvalues of $x$ (where $r = n$), we get representation (\ref{SpecRep}) for $\mathcal{O}_x$.
 \end{exm}


\section{Correspondence of face orbits}\label{secFaceCorresp}
We use the same notation as in section \ref{secSpecProp}. The Lie group $K$ acts on the set of all faces of the polar orbitope $\mathcal{O}_x$ by $k \cdot F = \{ \Ad_k \cdot y : y \in F \}$. 
Assuming $x \in \mathfrak{a}$ (for easier notation) another orbitope closely related to $\mathcal{O}_x$ is the momentum polytope $\Pi_x  := \conv(W \cdot x) = P(\mathcal{O}_x) \subseteq \mathfrak{a}$ (see Thm. \ref{thmKosConvexity}).
The Weyl group $W$ acts on $\mathfrak{a}$ and again by pointwise operation gives an action on the set of faces of $\Pi_x$.
The main result in \cite{bgh2} was to give a bijection between the $K$-orbits of faces of $\mathcal{O}_x$ and the $W$-orbits of faces of $\Pi_x$. We will present a new proof for this correspondence. The proof makes use of the fact that all the faces of $\mathcal{O}_x$ are exposed and of Kostant's convexity Theorem. It is partly based on the proof of Theorem 3.5 in \cite{sss}.

\begin{thm}\label{thmFaceCorresp}
Let $x \in \mathfrak{a}$ and let $F \subseteq \mathcal{O}_x$ be a face. Then we have:
\begin{enumerate}
\item \label{thmFaceCorresp1}
There exists $k \in K$ such that $f := P(k \cdot F)$ is a face of $\Pi_x$.
\item \label{thmFaceCorresp2}
Let $f$ and $k$ be as in (\ref{thmFaceCorresp1}), then $k \cdot F = P^{-1}(f) \cap \mathcal{O}_x$ and $f = (k \cdot F) \cap \mathfrak{a}$.
\item\label{thmFaceCorresp3}
Let $f \subseteq \Pi_x$ be a face and $\sigma \in W$. Further let $F := P^{-1}(f)\cap \mathcal{O}_x$ and $F' := P^{-1}(\sigma f)\cap \mathcal{O}_x$, then there exists $k \in K$ such that $F' = k \cdot F$.
\item\label{thmFaceCorresp4}
Let $f, f' \subseteq \Pi_x$ be faces such that $F := P^{-1}(f)\cap \mathcal{O}_x$ and $F' := P^{-1}(f')\cap \mathcal{O}_x$ satisfy $F' = k \cdot F$ for some $k \in K$. Then $f$ and $f'$ are on the same $W$-orbit. 

\end{enumerate}
\end{thm}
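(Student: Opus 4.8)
The plan is to exploit two structural inputs: (a) every face of $\mathcal{O}_x$ is exposed (which follows from Theorem \ref{thmPolarIsSpectra}), and (b) Kostant's convexity theorem $P(\mathcal{O}_x) = \Pi_x$ together with the slice identity $\mathcal{O}_x \cap \mathfrak{a} = \Pi_x$ from Lemma \ref{lemReductionToA}. For part (\ref{thmFaceCorresp1}), since $F$ is exposed there is a linear functional $\ell$ on $\mathfrak{p}$ with $F = \{ y \in \mathcal{O}_x : \ell(y) = \max_{\mathcal{O}_x} \ell \}$; using the inner product induced by the Killing form, write $\ell = \langle v, \cdot \rangle$ for some $v \in \mathfrak{p}$. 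Since every element of $\mathfrak{p}$ is $K$-conjugate into $\mathfrak{a}$, choose $k \in K$ with $\Ad_k v \in \mathfrak{a}$; replacing $F$ by $k \cdot F$ we may assume the exposing vector $v$ lies in $\mathfrak{a}$. Now I would argue that for $v \in \mathfrak{a}$ the projection $P$ intertwines the maximization on $\mathcal{O}_x$ and on $\Pi_x$: because $P$ is the orthogonal projection onto $\mathfrak{a}$ and $v \in \mathfrak{a}$, we have $\langle v, y \rangle = \langle v, P(y) \rangle$ for all $y \in \mathfrak{p}$, so $\max_{\mathcal{O}_x}\langle v,\cdot\rangle = \max_{\Pi_x}\langle v,\cdot\rangle$ and $P(F) = \{ z \in \Pi_x : \langle v, z\rangle = \max\}$ is exactly the face of $\Pi_x$ exposed by $v$. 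This gives $f := P(k\cdot F)$ as a face of $\Pi_x$.

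For part (\ref{thmFaceCorresp2}), keep the exposing vector $v \in \mathfrak{a}$ from part (\ref{thmFaceCorresp1}) (after renaming $k\cdot F$ as $F$). The inclusion $F \subseteq P^{-1}(f) \cap \mathcal{O}_x$ is immediate. Conversely, if $y \in \mathcal{O}_x$ with $P(y) \in f$, then $\langle v, y\rangle = \langle v, P(y)\rangle = \max_{\Pi_x}\langle v,\cdot\rangle = \max_{\mathcal{O}_x}\langle v,\cdot\rangle$, so $y \in F$; hence $F = P^{-1}(f)\cap\mathcal{O}_x$. For the identity $f = F \cap \mathfrak{a}$: one inclusion is $f = P(F) \subseteq F$? — no, rather use $f \subseteq \Pi_x = \mathcal{O}_x \cap \mathfrak{a}$ (Lemma \ref{lemReductionToA}) and $f \subseteq P^{-1}(f)$ so $f \subseteq (P^{-1}(f)\cap\mathcal{O}_x)\cap\mathfrak{a} = F \cap \mathfrak{a}$; the reverse inclusion is clear since for $y \in F \cap \mathfrak{a}$ we have $P(y) = y$ so $y \in f$.

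For part (\ref{thmFaceCorresp3}), pick a representative $n \in N_K(\mathfrak{a})$ of $\sigma$. Since $\Ad_n$ preserves $\mathfrak{a}$ and restricts to $\sigma$ there, and preserves $\mathcal{O}_x$ (as $x \in \mathfrak{a}$ is fixed up to the $W$-action, more precisely $\Ad_n(W\cdot x) = W\cdot x$ so $\Ad_n \mathcal{O}_x = \mathcal{O}_x$), and commutes with $P$ in the sense $P \circ \Ad_n = \sigma \circ P$ (both sides being orthogonal projection followed by the isometry $\sigma$, using that the Killing inner product is $\Ad_K$-invariant), we get $\Ad_n(P^{-1}(f)\cap\mathcal{O}_x) = P^{-1}(\sigma f)\cap\mathcal{O}_x$, i.e. $F' = n\cdot F$. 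For part (\ref{thmFaceCorresp4}), suppose $F' = k\cdot F$. Apply part (\ref{thmFaceCorresp1})/(\ref{thmFaceCorresp2}) machinery: from $F$ exposed by some $v \in \mathfrak{a}$ with $f = $ the $v$-face of $\Pi_x$, the face $F' = k\cdot F$ is exposed by $\Ad_k v$; by part (\ref{thmFaceCorresp1}) there is $k'$ with $\Ad_{k'}\Ad_k v \in \mathfrak{a}$ and $P(k' \cdot F') = $ the $(\Ad_{k'k}v)$-face of $\Pi_x$, which equals $f'$ if... — here is where care is needed: I must show the $W$-orbit of $f$ is determined. The cleanest route: both $f$ and $f'$ arise as $F\cap\mathfrak{a}$, $F'\cap\mathfrak{a}$ after suitable conjugation into $\mathfrak{a}$, and $F' = k\cdot F$; then $\Ad_k F \cap \mathfrak{a} = F' \cap \mathfrak{a} = f'$ while $F \cap \mathfrak{a} = f$; I then use that two $\mathfrak{a}$-faces of $\mathcal{O}_x$ that are $K$-conjugate are already $W$-conjugate — which should follow from the fact that $\Ad_k$ sends a maximal abelian subspace $\mathfrak{a}$ meeting $F$ in $f$ to another maximal abelian subspace $\Ad_k\mathfrak{a}$ meeting $F'$ in $\Ad_k f$; applying a further element of $K$ (using conjugacy of maximal abelian subspaces, \cite{kna} Thm. 6.51, relative to $F'$) and tracking the residual ambiguity, which lies in $N_K(\mathfrak{a})/Z_K(\mathfrak{a}) = W$.

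The main obstacle I anticipate is part (\ref{thmFaceCorresp4}): controlling precisely the ambiguity when one conjugates a face back into the slice $\mathfrak{a}$, and proving that this ambiguity is exactly $W$ and not something larger. The key technical point will be that if $g \in K$ satisfies $\Ad_g(f) \subseteq \mathfrak{a}$ for a full-dimensional-in-its-span face $f$ of $\Pi_x$ whose affine span, together with $x$, generates $\mathfrak{a}$ (or at least pins down $\mathfrak{a}$ among maximal abelian subspaces), then $g \in N_K(\mathfrak{a})$; one may need to first pass to the relative interior of $f$ and use that $\relint f$ consists of regular-enough elements, invoking Kostant's Proposition \ref{propKosKOrbitInA} to see that the intersection of a $K$-orbit with $\mathfrak{a}$ is a single $W$-orbit, which forces the conjugating elements to differ by $N_K(\mathfrak{a})$.
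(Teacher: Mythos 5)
Your arguments for parts (\ref{thmFaceCorresp1}), (\ref{thmFaceCorresp2}) and (\ref{thmFaceCorresp3}) are correct and essentially identical to the paper's: expose $F$ by $l=\left<v,\cdot\right>$, conjugate $v$ into $\mathfrak{a}$, and use $\left<v,y\right>=\left<v,P(y)\right>$ together with Kostant's convexity theorem; for (\ref{thmFaceCorresp3}) your intertwining $P\circ\Ad_n=\sigma\circ P$ is a slightly different but equivalent packaging of the paper's computation with supporting hyperplanes.

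Part (\ref{thmFaceCorresp4}) is where your proposal has a genuine gap, and the obstacle you flag is real but your proposed resolution does not work. The problem: $\Ad_k$ carries $F$ to $F'$, but it carries $f=F\cap\mathfrak{a}$ into $F'\cap\Ad_k\mathfrak{a}$, not into $F'\cap\mathfrak{a}=f'$; for a generic $k$ the image $\Ad_k(f)$ meets $\mathfrak{a}$ in nothing useful. Your ``key technical point'' --- that $\Ad_g(f)\subseteq\mathfrak{a}$ forces $g\in N_K(\mathfrak{a})$ --- is false in general: when $f$ is a vertex (or any low-dimensional face), its affine span does not pin down $\mathfrak{a}$, and the set of $g\in K$ moving $f$ into $\mathfrak{a}$ is far larger than $N_K(\mathfrak{a})\cdot\mathrm{Stab}_K(f)$. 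Kostant's Proposition \ref{propKosKOrbitInA} ($(\Ad_K\cdot z)\cap\mathfrak{a}=W\cdot z$) is also of no direct use here, because the point $\Ad_k\cdot z$ you would want to apply it to does not lie in $\mathfrak{a}$. The missing idea, which is the actual content of the paper's proof of (\ref{thmFaceCorresp4}), is to \emph{project} rather than to re-conjugate: take $z\in\relint(f)\subseteq F$, so $\Ad_k\cdot z\in F'$ and $y:=P(\Ad_k\cdot z)\in f'$; by Kostant's convexity theorem applied to the orbit of $z$ one has $y\in\Pi_z=\conv(W\cdot z)$; then a separate lemma (a face of $\Pi_x$ containing a point of $\Pi_z$, for $z\in\Pi_x$, must contain a vertex of $\Pi_z$, i.e.\ a $W$-translate of $z$) yields $\sigma\in W$ with $\sigma z\in f'$. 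Since $\sigma z\in\relint(\sigma f)$, the face $\sigma f$ is the smallest face of $\Pi_x$ containing $\sigma z$, hence $\sigma f\subseteq f'$; running the same argument with $k^{-1}$ gives the reverse dimension inequality and so $\sigma f=f'$. Without this projection-plus-vertex argument (or a substitute for it), your part (\ref{thmFaceCorresp4}) does not close.
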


Let $V$ be a finite dimensional vector space. We write $H_{l,\alpha} \subseteq V$ for the hyperplane $\{ x \in V : l(x) = \alpha\}$, where $l \in V^*$ and $\alpha \in \R$. 
A hyperplane $H_{l,\alpha}$ is a { \it supporting hyperplane} of a convex set $C \subseteq V$ if $l(C) \geq 0$ and there exists $c \in C$ such that $l(c) = 0$.
Let $l \in \mathfrak{a}^*$ and let $a \in \mathfrak{a}$ such that $l = \left< a, \cdot \right>$. Since $\mathfrak{a} \subseteq \mathfrak{p}$ this also gives a linear functional on $\mathfrak{p}$. In this way we will interpret the elements of $\mathfrak{a}^*$ as elements in $\mathfrak{p}^*$

\begin{rem}\label{remLyEqLPy}
Let $l \in \mathfrak{a}^*$ and $y \in \mathfrak{p}$. Since $P$ is the orthogonal projection onto $\mathfrak{a}$ we have $l(y) = l(P(y))$.
\end{rem}

\begin{proof}
Without loss of generality we assume that $F$ is a proper face.\medskip

\noindent (\ref{thmFaceCorresp1}) Since $\mathcal{O}_x$ is a spectrahedron $F$ is exposed, so there exists a supporting hyperplane $H_{l,\alpha}$ of $\mathcal{O}_x$ with linear functional $l \in \mathfrak{p}^*$ and $\alpha \in \R$ such that $F = \mathcal{O}_x \cap H_{l,\alpha}$.
Then there exists $A \in \mathfrak{p}$ with $l = \left< A, \cdot \right>$. Since every element in $\mathfrak{p}$ is $K$-conjugate to an element in $\mathfrak{a}$ there exists $k \in K$ such that $\Ad_k \cdot A \in \mathfrak{a}$. 
Then the linear functional $l^k := \left< \Ad_k \cdot A, \cdot \right>$ lies in $\mathfrak{a}^*$ as well as in $\mathfrak{p}^*$. 
Since the Killing form is $\Ad_k$-invariant we have $F^k := \mathcal{O}_x \cap H_{l^k,\alpha} = k \cdot F$.
By Remark \ref{remLyEqLPy} we have $l^k(y) = l^k(P(y))$ for all $y \in \mathfrak{p}$.
So by Kostant's convexity Theorem $P(F^k) = P(\mathcal{O}_x \cap H_{l^k, \alpha}) = \Pi_x \cap H_{l^k, \alpha}$ and since $\Pi_x \subseteq \mathcal{O}_x$ and $F^k \neq \emptyset$ the hyperplane $H_{l^k,\alpha}$ is a supporting hyperplane for $\Pi_x$. So $f := P(F^k)$ is a face of $\Pi_x$. \medskip

\noindent (\ref{thmFaceCorresp2}) Clearly we have $F^k \subseteq P^{-1}(f) \cap \mathcal{O}_x$. On the other hand let $y \in P^{-1}(f) \cap \mathcal{O}_x = P^{-1}(\Pi_x \cap H_{l, \alpha}) \cap \mathcal{O}_x$. By Remark \ref{remLyEqLPy} we have $l (y) = l(P(y)) = \alpha$, so $y \in \mathcal{O}_x \cap H_{l, \alpha} = F^k$.
Using Lemma \ref{lemReductionToA} we can see that the equality $f = \Pi_x \cap H_{l, \alpha} = \mathcal{O}_x \cap \mathfrak{a} \cap H_{l, \alpha} = F^k \cap \mathfrak{a}$ holds. \medskip

\noindent (\ref{thmFaceCorresp3}) Let $k \in N_K(\mathfrak{a})$ be a representative of $\sigma$ (by definition $W = N_K(\mathfrak{a}) / Z_K(\mathfrak{a})$).  Since $\Pi_x$ is a polytope $f$ is an exposed face. Let $f = H_{l,\alpha} \cap \Pi_x$ and $l(\Pi_x) \geq 0$, where $l = \left< A, \cdot \right>$, $A \in \mathfrak{a}$. Define $l^k := \left< \Ad_k \cdot A, \cdot \right>$, then $\sigma f = H_{l^k,\alpha} \cap \Pi_x$. We conclude $F' = H_{l^k, \alpha} \cap \mathcal{O}_x = k \cdot \left( H_{l,\alpha} \cap \mathcal{O}_x\right) = k \cdot F$.
 
 Before we move to proving (\ref{thmFaceCorresp4}) we prove the following lemma:

\begin{lem}\label{lemFindSigma}
Let $f \subseteq \Pi_x$ be a nonempty face and let $y \in f, z \in \Pi_x$. Let $y \in \Pi_z$, then there exists $\sigma \in W$ so that $\sigma z \in f$. 
\end{lem}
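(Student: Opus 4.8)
The plan is to use only two ingredients: the $W$-invariance of the momentum polytope and the elementary absorption property of faces of a convex set; no appeal to spectrahedrality or to Kostant's theorems is needed here. First I would observe that $\Pi_x = \conv(W\cdot x)$ is stable under $W$, because $W$ merely permutes the finite orbit $W\cdot x$. Hence from $z\in\Pi_x$ we obtain $W\cdot z\subseteq\Pi_x$, and therefore $\Pi_z=\conv(W\cdot z)\subseteq\Pi_x$.

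Next, since $y\in\Pi_z$, I would write $y$ as a convex combination of points of the orbit $W\cdot z$, say $y=\sum_{i=1}^m\lambda_i\,\sigma_i z$ with $\sigma_i\in W$, $\lambda_i>0$, and $\sum_i\lambda_i=1$; after discarding terms with zero coefficient one may indeed assume all $\lambda_i$ are strictly positive. By the previous paragraph each point $\sigma_i z$ lies in $\Pi_x$.

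The key step is then the standard fact that a face $f$ of a convex set absorbs the vertices of any convex combination it contains: if $y\in f$ and $y=\sum_i\lambda_i p_i$ is a convex combination with all $\lambda_i>0$ and all $p_i\in\Pi_x$, then $p_i\in f$ for every $i$. (This follows from the defining property of a face by an easy induction on the number of summands, repeatedly splitting off one point and using that a point of $f$ lying in the relative interior of a segment contained in $\Pi_x$ forces both endpoints into $f$.) Applying this with $p_i=\sigma_i z$ gives $\sigma_i z\in f$ for all $i$; in particular $\sigma:=\sigma_1\in W$ satisfies $\sigma z\in f$, as desired.

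I do not expect a genuine obstacle in this lemma. The only point requiring a little care is to invoke the correct definition of a face so that the absorption property is available, and to note explicitly that one may take all coefficients in the convex combination $y=\sum_i\lambda_i\,\sigma_i z$ to be positive before applying it.
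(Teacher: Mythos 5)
Your proof is correct, but it takes a genuinely different route from the paper's. The paper exploits exposedness and the vertex structure: it writes $f=\Pi_x\cap H$ for a supporting hyperplane $H$, observes that $H$ then also supports $\Pi_z\subseteq\Pi_x$, so that $f':=\Pi_z\cap H$ is a nonempty face of the polytope $\Pi_z$ and hence contains a vertex, which must lie in the finite set $W\cdot z$; that vertex is the desired $\sigma z\in f'\subseteq f$. You instead bypass supporting hyperplanes entirely and argue from the intrinsic (extremality) definition of a face: writing $y=\sum_i\lambda_i\,\sigma_i z$ with all $\lambda_i>0$ and all $\sigma_i z\in\Pi_x$ (using, as the paper also implicitly does, that $W$-invariance of $\Pi_x$ gives $\Pi_z\subseteq\Pi_x$), the absorption property of faces forces every $\sigma_i z$ into $f$. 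Both arguments are complete; yours needs only the definition of a face plus the standard induction establishing absorption, while the paper's needs that faces of polytopes are cut out by supporting hyperplanes and that the vertices of $\conv(W\cdot z)$ lie in $W\cdot z$. Your version even yields the marginally stronger conclusion that all the orbit points appearing in the convex combination land in $f$, not just one of them.
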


\begin{proof}
We can assume that $f$ is a proper face of $\Pi_x$. So there is a supporting hyperplane $H \subseteq \mathfrak{a}$ of $\Pi_x$ such that $f = \Pi_x \cap H$. 
Since $y \in \Pi_z \cap H$ and $\Pi_z \subseteq \Pi_x$ the hyperplane $H$ is also a supporting hyperplane for $\Pi_z$.
 So $f' := \Pi_z \cap H$ is a face of $\Pi_z$. The face $f'$ contains a vertex $z'$ of $\Pi_z$. So there is a $\sigma \in W$ with $\sigma z = z'$. And we have $z' \in f' = \Pi_z \cap H \subseteq \Pi_x \cap H = f$.
\end{proof}

\noindent (\ref{thmFaceCorresp4}) Let $f = H_{l,\alpha} \cap \Pi_x$ with $l\in \mathfrak{a}^*$ and $\alpha \in \R$. By Remark \ref{remLyEqLPy} we have $F = H_{l,\alpha} \cap \mathcal{O}_x$ and using (\ref{thmFaceCorresp2}) we see that $f = F \cap \mathfrak{a}$. 
Let $z$ be in the relative interior of $f$, i.e. $z \in\relint(f)$. Since $f = F \cap \mathfrak{a}$ we have $z \in F$, so $\Ad_k \cdot z \in F'$ and $y := P(\Ad_k \cdot z) \in f'$.
By Kostant's convexity Theorem (Thm. \ref{thmKosConvexity}) we have $y \in \Pi_z$. By Lemma \ref{lemFindSigma} there is a $\sigma \in W$ with $\sigma z \in f'$. Since $z \in \relint(f)$ we have $\sigma z \in \relint(\sigma f)$.
So $\sigma f$ is the smallest face of $\Pi_x$ containing $\sigma z$. Since $\sigma z \in f'$ we see that $\sigma f \subseteq f'$ and $\dim (f) = \dim(\sigma f) \leq \dim f'$. We can use the same arguments to show $\dim (f') \leq \dim (f) = \dim (\sigma f)$, so $\sigma f = f'$.
\end{proof}

\begin{cor}
There is a bijection between the $K$-orbits of faces of $\mathcal{O}_x$ and the $W$-orbits of faces of $\Pi_x$. Let $f$ be a face of $\Pi_x$, then the bijection maps the $W$-orbit represented by $f$ to the $K$-orbit represented by $P^{-1}(f) \cap \mathcal{O}_x$.
\end{cor}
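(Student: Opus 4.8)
The plan is to package the four parts of Theorem~\ref{thmFaceCorresp} into a single map and verify it is a bijection. Define $\Phi$ on the set of $W$-orbits of faces of $\Pi_x$ by sending the orbit of a face $f\subseteq\Pi_x$ to the orbit of the set $P^{-1}(f)\cap\mathcal{O}_x$. Before anything else I would record the preliminary fact that $P^{-1}(f)\cap\mathcal{O}_x$ really is a face of $\mathcal{O}_x$: writing an exposing hyperplane $f=H_{l,\alpha}\cap\Pi_x$ with $l\in\mathfrak{a}^*$, Remark~\ref{remLyEqLPy} gives $H_{l,\alpha}\cap\mathcal{O}_x=P^{-1}(f)\cap\mathcal{O}_x$, while $l(\mathcal{O}_x)=l(P(\mathcal{O}_x))=l(\Pi_x)$ by Kostant's convexity Theorem~\ref{thmKosConvexity} shows $H_{l,\alpha}$ supports $\mathcal{O}_x$ (nonempty since $f\subseteq\Pi_x\subseteq\mathcal{O}_x$). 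So $\Phi$ indeed lands in the set of $K$-orbits of faces of $\mathcal{O}_x$; this is the computation already carried out inside the proof of part~(\ref{thmFaceCorresp3}).

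Next I would check well-definedness. If $f'=\sigma f$ for some $\sigma\in W$, part~(\ref{thmFaceCorresp3}) produces $k\in K$ with $P^{-1}(\sigma f)\cap\mathcal{O}_x=k\cdot(P^{-1}(f)\cap\mathcal{O}_x)$, so the two output faces lie in the same $K$-orbit and $\Phi$ does not depend on the chosen representative of the $W$-orbit. Injectivity of $\Phi$ is exactly part~(\ref{thmFaceCorresp4}): if $P^{-1}(f)\cap\mathcal{O}_x$ and $P^{-1}(f')\cap\mathcal{O}_x$ are $K$-conjugate, then $f$ and $f'$ are on one $W$-orbit, i.e.\ they represent the same point of the source of $\Phi$.

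For surjectivity I would start from an arbitrary face $F$ of $\mathcal{O}_x$. Part~(\ref{thmFaceCorresp1}) gives $k\in K$ with $f:=P(k\cdot F)$ a face of $\Pi_x$, and part~(\ref{thmFaceCorresp2}) then yields $k\cdot F=P^{-1}(f)\cap\mathcal{O}_x$. Since $F$ and $k\cdot F$ lie in the same $K$-orbit, that orbit equals $\Phi$ applied to the $W$-orbit of $f$, so $\Phi$ is onto. Combining the three points, $\Phi$ is a bijection sending the $W$-orbit of $f$ to the $K$-orbit of $P^{-1}(f)\cap\mathcal{O}_x$, as claimed.

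I do not anticipate a genuine obstacle: all the mathematical content sits in Theorem~\ref{thmFaceCorresp}, and the corollary is bookkeeping. The one spot deserving a sentence of care is the preliminary claim that $P^{-1}(f)\cap\mathcal{O}_x$ is a face (so that the target of $\Phi$ is the right set), together with a remark that the improper faces match up trivially, since $P^{-1}(\emptyset)\cap\mathcal{O}_x=\emptyset$ and $P^{-1}(\Pi_x)\cap\mathcal{O}_x=\mathcal{O}_x$ are fixed by the respective group actions; one may either include these cases or restrict attention to proper nonempty faces throughout.
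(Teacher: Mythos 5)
Your proposal is correct and takes essentially the same route as the paper: the paper's own proof is the one-line remark that Theorem~\ref{thmFaceCorresp} provides well-definedness and an inverse mapping, and you have merely made explicit which part does what (part~(3) for well-definedness, part~(4) for injectivity, parts~(1) and~(2) for surjectivity). Your added care that $P^{-1}(f)\cap\mathcal{O}_x$ is genuinely a face of $\mathcal{O}_x$, and your remark on the improper faces, are sound and only spell out what the paper leaves implicit.
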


\begin{proof}
Theroem \ref{thmFaceCorresp} gives an inverse mapping and shows that these mappings are well defined.
\end{proof}

As an immediate consequence there exist only finitely many $K$-orbits of faces of the polar orbitope $\mathcal{O}_x$.

\end{document}